\documentclass[11pt,a4paper]{article}

\usepackage[margin=1in]{geometry}
\usepackage{amssymb,mathtools,amsthm}
\usepackage{algorithm,algorithmic}
\usepackage{array}
\usepackage{booktabs}
\usepackage{enumitem}
\usepackage{bm}
\usepackage{graphicx}
\usepackage{float}
\usepackage[section]{placeins}
\usepackage{microtype}
\usepackage{needspace}
\usepackage{titlesec}
\usepackage[hidelinks]{hyperref}

\allowdisplaybreaks
\titleformat{\section}{\large\bfseries}{\thesection}{0.7em}{}
\titleformat{\subsection}{\normalsize\bfseries}{\thesubsection}{0.7em}{}
\graphicspath{{./}}

\newtheorem{theorem}{Theorem}[section]
\newtheorem{lemma}[theorem]{Lemma}
\newtheorem{proposition}[theorem]{Proposition}
\newtheorem{corollary}[theorem]{Corollary}
\newtheorem{assumption}[theorem]{Assumption}

\theoremstyle{definition}

\theoremstyle{remark}
\newtheorem{remark}[theorem]{Remark}

\numberwithin{equation}{section}
\numberwithin{algorithm}{section}
\newcommand{\email}[1]{\href{mailto:#1}{\texttt{#1}}}
\newenvironment{keywords}{\par\smallskip\noindent\textbf{Keywords.}\ }{\par}
\newenvironment{MSCcodes}{\par\smallskip\noindent\textbf{Mathematics Subject Classification (2020).}\ }{\par}
\date{}

\title{Halpern-Accelerated Majorized ADMM with Optimal Non-Ergodic Convergence under Degenerate Preconditioning}

\author{Ning Zhang\thanks{School of Computer Science and Technology, Dongguan University of Technology, Dongguan 523808, China
(\email{zhangning@dgut.edu.cn}).}
\and Benqi Liu\thanks{Corresponding author.  Beijing International Center for Mathematical Research, Peking University, Beijing 100871, China
(\email{bqliu@pku.edu.cn}).}
\and Liwei Zhang\thanks{National Frontiers Science Center for Industrial Intelligence and Systems Optimization, Northeastern University, Shenyang
110819, China (\email{zhanglw@mail.neu.edu.cn}).}}

\hypersetup{
pdftitle={Halpern-Accelerated Majorized ADMM with Optimal Non-Ergodic Convergence under Degenerate Preconditioning},
pdfauthor={Ning Zhang, Benqi Liu, and Liwei Zhang}
}

\begin{document}
\maketitle

\begin{abstract}
We develop a Halpern-accelerated majorized alternating direction method of multipliers (ADMM) with possibly indefinite proximal terms for linearly
constrained convex composite optimization. Majorization simplifies the subproblems but introduces a forward perturbation to the standard degenerate proximal point representation.
We reformulate majorized ADMM as a perturbed degenerate proximal point method and characterize the fixed points of the induced mapping as
Karush--Kuhn--Tucker (KKT) solutions. Under range compatibility and metric cocoercivity conditions, the mapping is nonexpansive in the seminorm induced by a possibly singular preconditioner. In finite dimensions, a verifiable block-matrix condition permits indefinite proximal regularization while retaining the required metric properties. A relaxed Halpern iteration then yields an $\mathcal O(1/k)$ fixed-point residual bound, sharp in the general nonexpansive setting, and an $\mathcal O(1/k)$ nonergodic KKT residual bound at the intermediate iterates. Numerical experiments illustrate the predicted residual decay and the benefits of majorization and indefinite proximal terms.
\end{abstract}

\begin{keywords}
majorized alternating direction method of multipliers, Halpern iteration, degenerate preconditioning, indefinite proximal term, nonergodic convergence, Karush--Kuhn--Tucker residual
\end{keywords}

\begin{MSCcodes}
90C25, 65K05, 47H05, 49M27
\end{MSCcodes}

\section{Introduction}

We develop a Halpern-accelerated majorized alternating direction method of multipliers (ADMM) for the following linearly constrained convex composite
optimization problem:
\begin{equation}\label{model}
\min_{y\in\mathbb{Y},\,z\in\mathbb{Z}}
p(y)+f(y)+q(z)+g(z)
\quad
\mathrm{s.t.}
\quad
Ay+Bz=c,
\end{equation}
where $\mathbb{X}$, $\mathbb{Y}$, and $\mathbb{Z}$ are finite-dimensional Euclidean spaces, each equipped with an inner product
$\langle\cdot,\cdot\rangle$ and its induced norm $\|\cdot\|$. The functions $p$ and $q$ are closed proper convex functions, and $f$ and $g$
are convex functions with Lipschitz continuous gradients. The operators $A:\mathbb Y\to\mathbb X$ and $B:\mathbb Z\to\mathbb X$
are linear, and $c\in\mathbb X$ is given.

ADMM and its variants have been extensively studied and widely applied to structured separable convex optimization problems arising in signal processing, image science, machine learning, and operations research; see, e.g., \cite{Chen2012,chen2019unified,fazel2013hankel,fortin2000augmented,gabay1976dual,glowinski2013numerical,gu2014customized,han2018linear,li2016majorized,xiao2018generalized,zhang2020linearly} and the references therein. It is well known that the classical ADMM with unit step size is equivalent to the Douglas--Rachford splitting method applied to the dual inclusion problem, and the Douglas--Rachford splitting can in turn be interpreted as a special instance of the proximal point algorithm. This operator-theoretic perspective has played a fundamental role in the convergence analysis of the classical ADMM and has inspired the development of many modern operator-splitting algorithms.

Beyond this classical setting, a variety of enhanced ADMM-type methods have been developed by incorporating larger dual step sizes together with positive semidefinite or even indefinite proximal terms. For these generalized schemes, however, the classical operator-theoretic interpretation is generally no longer available. Consequently, the convergence analysis has been carried out primarily within the framework of convex optimization by exploiting the structure of the underlying primal-dual formulation, rather than through abstract operator-splitting techniques. Under standard convexity assumptions, global convergence has been well established, while linear convergence can be obtained under suitable error bound conditions, which are known to hold automatically for many important applications such as Lasso, fused Lasso, and group Lasso. In addition, the ergodic iteration complexity of order $O(1/k)$ has also been extensively studied.

In recent years, acceleration schemes based on the Halpern iteration have attracted growing attention. Halpern's fixed-point
iteration \cite{Hal1967} was originally proposed for approximating fixed points of nonexpansive mappings, and its sharp $\mathcal{O}(1/k)$ fixed-point residual bound in Hilbert spaces was established by Lieder \cite{lieder2021convergence}. Building upon Lieder's convergence analysis, Zhang \emph{et al.} \cite{zhang2025halpern} extended the Halpern framework to
the inexact proximal point method by introducing practical inexactness criteria while preserving the $\mathcal{O}(1/k)$ fixed-point residual rate. On the other hand, Kim \cite{kim2021} proposed an accelerated proximal point method for maximally monotone operators, and Contreras and Cominetti \cite{contreras2023optimal} subsequently revealed that Kim's method admits an
equivalent interpretation as a Halpern iteration applied to the proximal point framework, thereby providing an operator-theoretic explanation for its acceleration mechanism.

The above developments have recently inspired the acceleration of a broad class of first-order methods. In particular, Halpern-type acceleration has been incorporated into preconditioned ADMM~\cite{sun2025accelerating} and the primal--dual hybrid gradient method~\cite{lu2024restarted}. It has also led to GPU-oriented solvers for large-scale linear programming, including HPR-LP and cuPDLPx~\cite{chen2025hpr,lu2025cupdlpx}. The key ingredient underlying these developments is that the corresponding first-order algorithms admit equivalent fixed-point formulations associated with nonexpansive operators, which makes Halpern-type acceleration directly applicable. A notable example is the degenerate preconditioned proximal point framework established by Bredies \emph{et al.} \cite{bredies2022degenerate}. Based on this framework, Sun \emph{et al.} \cite{sun2025accelerating} and Chen \emph{et al.} \cite{chen2025hpr} showed that ADMM with positive semidefinite proximal terms admits an equivalent fixed-point characterization through a degenerate proximal point mapping. This characterization enables Halpern-type acceleration and leads to optimal nonergodic convergence guarantees for the corresponding ADMM-type methods.

Despite this progress, many applications involve smooth, nonquadratic objective functions. Solving the resulting ADMM subproblems exactly can still be expensive. To address this issue, the majorization technique replaces the smooth functions $f$ and $g$ with quadratic surrogate models. Specifically, for any $y,y'\in\mathbb{Y}$ and $z,z'\in\mathbb{Z}$, one defines
\begin{align}
\hat{f}(y,y')
&:= f(y')+\langle\nabla f(y'),\,y-y'\rangle +\frac{1}{2}\|y-y'\|_{\widehat{\Sigma}_f}^{2},\label{eq:maj_f}\\
\hat{g}(z,z')
&:= g(z')+\langle\nabla g(z'),\,z-z'\rangle +\frac{1}{2}\|z-z'\|_{\widehat{\Sigma}_g}^{2},\label{eq:maj_g}
\end{align}
where $\widehat{\Sigma}_{f}$ and $\widehat{\Sigma}_{g}$ are self-adjoint positive semidefinite operators chosen to satisfy the majorization conditions, whose existence is guaranteed by the Lipschitz continuity of $\nabla f$ and $\nabla g$, respectively. This construction gives rise to the majorized ADMM framework, which substantially reduces the computational cost per iteration while preserving the favorable convergence properties of ADMM, especially when either $f$ or $g$ is nonlinear or nonquadratic \cite{chen2019unified,li2016majorized,zhang2020linearly}. In addition, Cui \emph{et al.} \cite{cui2016convergence} established a best-iterate nonergodic $\mathcal{O}(1/\sqrt{k})$ iteration-complexity bound for a majorized ADMM framework, which also covers the classical ADMM as a special case, in terms of an appropriate KKT residual measure. However, unlike the semidefinite proximal ADMM discussed above, the introduction of majorization generally destroys the exact fixed-point characterization arising from the degenerate proximal point framework. As a result, the existing operator-theoretic acceleration techniques are no longer directly applicable.

Throughout the paper, ``majorization'' means that
\begin{equation}\label{eq:majorization-inequalities}
f(y)\leq \hat f(y,y'),\qquad g(z)\leq \hat g(z,z') \quad \text{for all }y,y'\in\mathbb Y\text{ and }z,z'\in\mathbb Z.
\end{equation}

The key challenge is to recover an operator-theoretic framework for the majorized ADMM after the introduction of majorization breaks the existing equivalence with the degenerate proximal point method. Our main technical contribution is to show that the majorized ADMM can still be reformulated as a perturbed degenerate proximal point method, thereby recovering a fixed-point characterization that enables the subsequent operator-theoretic convergence analysis and Halpern acceleration.

In this paper, we bridge this gap by establishing a new operator-theoretic characterization for majorized ADMM, through which the majorized ADMM iteration is reformulated as a perturbed degenerate proximal point method. This characterization enables us to extend the Halpern acceleration framework from semidefinite proximal ADMM to the majorized setting. Building upon this framework, we establish the nonexpansiveness of the associated fixed-point mapping, prove global convergence of the standard fixed-point iteration in finite dimensions, and derive an optimal $\mathcal{O}(1/k)$ nonergodic fixed-point residual bound together with an $\mathcal{O}(1/k)$ KKT residual bound at the intermediate iterates. To the best of our knowledge, this is the first work that systematically develops an operator-theoretic framework for Halpern acceleration of majorized ADMM.

The main contributions of this paper are threefold. First, we establish a novel operator-theoretic characterization for the majorized ADMM by reformulating its iteration as a perturbed degenerate proximal point method. This characterization extends the existing degenerate proximal point framework from semidefinite proximal ADMM to the majorized setting, thereby bridging the gap between majorized ADMM and fixed-point methods. Second, based on the proposed operator-theoretic framework, we establish the nonexpansiveness of the associated fixed-point mapping in the degenerate metric induced by the preconditioner under the explicit range-compatibility
condition stated in Assumption~\ref{assume-coco}. This yields global convergence of the standard iteration without separately assuming ambient boundedness or solution-valued cluster points. Finally, we develop a Halpern-accelerated majorized ADMM and establish the optimal $\mathcal{O}(1/k)$ convergence rate for its fixed-point
residual. We further show that the associated KKT residual admits an $\mathcal{O}(1/k)$ upper bound at the intermediate iterates.

The remainder of this paper is organized as follows. In Section~2, we establish a novel operator-theoretic characterization of the majorized ADMM by reformulating its iteration as a perturbed degenerate proximal point method. Section~3 develops the operator-theoretic convergence analysis of the associated fixed-point iteration by establishing the
nonexpansiveness of the induced fixed-point mapping and its global convergence under a degenerate metric. In Section~4, we propose a Halpern-accelerated majorized ADMM and establish the optimal $\mathcal{O}(1/k)$ nonergodic fixed-point residual rate. Section~5 presents numerical experiments that illustrate the theoretical
results and compare different algorithmic mechanisms under controlled settings.
Section~\ref{sec:conclusion} concludes the paper.

\noindent
{\bf Notations.}  For a linear operator $\Sigma$, we denote by $\operatorname{Im}(\Sigma)$ and $\operatorname{ker}(\Sigma)$ its range and null space, respectively. The symbol $\Sigma^\dagger$ denotes the Moore--Penrose pseudoinverse of $\Sigma$. For a subspace $\mathcal U$, $\mathcal U^\perp$ denotes its orthogonal complement.  Let $\mathbb H:=\mathbb Y\times\mathbb Z\times\mathbb X$, and denote the primal-dual variable associated with problem \eqref{model} by $w=(y,z,x)\in\mathbb H$.

\section{An Operator-Theoretic Reformulation of the Majorized ADMM}\label{sec2:operator-theoretic}

The majorization technique destroys the operator-theoretic characterization that underlies the existing Halpern-accelerated ADMM methods. The objective of this section is to recover such a characterization by reformulating the majorized ADMM as a perturbed degenerate proximal point method. This reformulation extends the existing operator-theoretic framework developed for semidefinite proximal ADMM to the majorized setting and serves as the foundation for the convergence analysis and acceleration results established in the subsequent sections.

Following the iteration framework studied in \cite{sun2025accelerating}, we adopt the updating order
$z \rightarrow x \rightarrow y$, which coincides with the generalized ADMM proposed in \cite{Chen2012,xiao2018generalized}.
The intermediate mapping of the majorized ADMM (MajADMM) defined below uses
the unit multiplier update.
The separate outer relaxation parameter $\rho$ in Algorithm~\ref{alg:halpern} is introduced after the fixed-point reformulation in Section~\ref{sec:HMADMM}. Specifically, given a penalty parameter $\sigma>0$, proximal operators $\mathcal S$ and $\mathcal T$, and an initial point $w^0=(y^0,z^0,x^0)$, the iteration is given by
\begin{subequations}\label{alg-admm}
\begin{align}
z^{k+1}
&= \arg\min_{z} \Bigl\{q(z)+\hat{g}(z,z^k)
  +\langle B^*x^k,z\rangle 
  +\frac{\sigma}{2}\|Ay^k+Bz-c\|^2
  +\frac{1}{2}\|z-z^k\|_{\mathcal T}^2\Bigr\}, \label{alg-z}\\[2mm]
x^{k+1}
&=x^k+\sigma(Ay^k+Bz^{k+1}-c), \label{alg-x}\\[2mm]
y^{k+1}
&= \arg\min_{y} \Bigl\{p(y)+\hat{f}(y,y^k)
  +\langle A^*x^{k+1},y\rangle 
  +\frac{\sigma}{2}\|Ay+Bz^{k+1}-c\|^2
  +\frac{1}{2}\|y-y^k\|_{\mathcal S}^2\Bigr\}, \label{alg-y}
\end{align}
\end{subequations}
where the functions $\hat{f}$ and $\hat{g}$ are defined in \eqref{eq:maj_f} and \eqref{eq:maj_g}, respectively.
The key step in our analysis is to show that the above iteration admits an equivalent operator-theoretic reformulation. To this end, we first introduce the optimality system associated with problem \eqref{model}.

The Karush-Kuhn-Tucker (KKT) optimality condition associated with problem \eqref{model} can be written as  \(0 \in {\cal R}(w)\), where the exact maximal monotone operator \(\cal R\) is defined as:
\begin{equation}\label{def-R}
{\cal R}(w) = \begin{pmatrix}
\partial p(y) + \nabla f(y) + A^* x \\[2pt]
\partial q(z) + \nabla g(z) + B^* x \\[2pt]
- Ay - Bz + c
\end{pmatrix}.
\end{equation}To facilitate the subsequent operator-theoretic analysis, we decompose the KKT operator into the following two components:
\begin{equation}\label{def-R-F}
\mathcal{R}_{ns}(w)=
\begin{pmatrix}
\partial p(y)+A^*x\\[2pt]
\partial q(z)+B^*x\\[2pt]
-Ay-Bz+c
\end{pmatrix} \,\,\hbox{and}\,\, \mathcal{F}_{sm}(w)=
\begin{pmatrix}
\nabla f(y)\\[2pt]
\nabla g(z)\\[2pt]
0
\end{pmatrix}.
\end{equation}Accordingly,
\begin{equation}\label{17}
\mathcal R
=
\mathcal R_{ns}
+
\mathcal F_{sm},
\end{equation}and the KKT condition is equivalently written as
$$
0\in
\mathcal R_{ns}(w^*)
+
\mathcal F_{sm}(w^*).
$$
We write the KKT solution set as
\begin{equation}\label{def-Omega}
\Omega^*:=\operatorname{zer}(\mathcal R)
=\{w\in\mathbb H:0\in\mathcal R_{ns}(w)+\mathcal F_{sm}(w)\}.
\end{equation}
This decomposition separates the maximal monotone and smooth components of the KKT operator, which is essential for constructing the perturbed degenerate proximal point reformulation developed below. Following \cite{sun2025accelerating}, we introduce the symmetric preconditioning operator
\begin{equation}\label{def-M}
\mathcal{M} = \begin{pmatrix}
\mathcal{S} + \widehat{\Sigma}_f + \sigma A^* A & 0 & A^* \\[2pt]
0 & \mathcal{T} + \widehat{\Sigma}_g & 0 \\[2pt]
A & 0 & {\sigma}^{-1} I
\end{pmatrix}.
\end{equation}
For later use, set $\mathcal P_f:=\mathcal S+\widehat\Sigma_f$ and $\mathcal P_g:=\mathcal T+\widehat\Sigma_g$.

\begin{assumption}\label{assume-wellposed}
The KKT solution set $\Omega^*$ is nonempty, and
\begin{equation}\label{eq:wellposed-curvature}
\mathcal P_f\succeq0,\,\, \mathcal P_g\succeq0,\,\, \mathcal P_f+\sigma A^*A\succ0,\,\, \text{and}\,\,
\mathcal P_g+\sigma B^*B\succ0.
\end{equation}
\end{assumption}

Assumption~\ref{assume-wellposed} permits the proximal operators $\mathcal S$ and $\mathcal T$ themselves to be indefinite. It requires only that the operators formed by combining the majorization and proximal
terms be positive semidefinite and that the two subproblem Hessian operators in \eqref{eq:wellposed-curvature} be positive definite. Consequently, both primal subproblems have unique solutions and Proposition~\ref{prop:DPPA-Equiv} gives $\mathcal M\succeq0$.

We are now ready to establish the desired operator-theoretic reformulation of the majorized ADMM. The following proposition shows that the iteration \eqref{alg-admm} can be equivalently represented as a perturbed degenerate proximal point method.

\begin{proposition}\label{prop:DPPA-Equiv}
Let $\{w^k\}$ be the sequence generated by \eqref{alg-admm}. Then it satisfies the perturbed degenerate proximal point inclusion
\begin{equation}\label{dPPA}
0\in \mathcal R(w^{k+1})+\mathcal M(w^{k+1}-w^k)+{\cal E}^k,
\end{equation}
where ${\mathcal E}^k:=\mathcal F_{sm}(w^k)-\mathcal F_{sm}(w^{k+1})$.
Equivalently,
\begin{equation}\label{dPPA-equivalent}
0\in \mathcal R_{ns}(w^{k+1})+\mathcal F_{sm}(w^k)
+\mathcal M(w^{k+1}-w^k).
\end{equation}
Moreover, if $\mathcal S+\widehat{\Sigma}_f\succeq0$ and
$\mathcal T+\widehat{\Sigma}_g\succeq0$, then $\mathcal M\succeq0$.
\end{proposition}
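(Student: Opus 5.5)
We write out the first-order optimality conditions of the three updates in \eqref{alg-admm} and compare them, block by block, with the three rows of \eqref{dPPA-equivalent}. Since \eqref{17} gives $\mathcal R=\mathcal R_{ns}+\mathcal F_{sm}$, \eqref{dPPA} is just \eqref{dPPA-equivalent} with $\mathcal F_{sm}(w^{k+1})$ added and subtracted. So it suffices to prove \eqref{dPPA-equivalent}. By construction $\nabla_y\hat f(y,y^k)=\nabla f(y^k)+\widehat\Sigma_f(y-y^k)$, and similarly for $\hat g$. This is the only place the majorization enters, and it is exactly why the smooth part is evaluated at $w^k$ rather than $w^{k+1}$.

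\emph{$z$-block.} The optimality condition for \eqref{alg-z} reads
\[
0\in\partial q(z^{k+1})+\nabla g(z^k)+\widehat\Sigma_g(z^{k+1}-z^k)+B^*x^k+\sigma B^*(Ay^k+Bz^{k+1}-c)+\mathcal T(z^{k+1}-z^k).
\]
By \eqref{alg-x}, $B^*x^k+\sigma B^*(Ay^k+Bz^{k+1}-c)=B^*x^{k+1}$. Hence
$0\in\partial q(z^{k+1})+B^*x^{k+1}+\nabla g(z^k)+(\mathcal T+\widehat\Sigma_g)(z^{k+1}-z^k)$, which is the second row of \eqref{dPPA-equivalent} because the second row of $\mathcal M$ is $(0,\mathcal T+\widehat\Sigma_g,0)$.

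\emph{$y$-block.} The optimality condition for \eqref{alg-y} reads
\[
0\in\partial p(y^{k+1})+\nabla f(y^k)+\mathcal P_f(y^{k+1}-y^k)+A^*x^{k+1}+\sigma A^*(Ay^{k+1}+Bz^{k+1}-c).
\]
Using \eqref{alg-x}, write $\sigma(Ay^{k+1}+Bz^{k+1}-c)=\sigma A(y^{k+1}-y^k)+(x^{k+1}-x^k)$. The $y$-row of $\mathcal M(w^{k+1}-w^k)$ is $(\mathcal P_f+\sigma A^*A)(y^{k+1}-y^k)+A^*(x^{k+1}-x^k)$, so the first row matches exactly.

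\emph{$x$-block.} We need $0=-Ay^{k+1}-Bz^{k+1}+c+A(y^{k+1}-y^k)+\sigma^{-1}(x^{k+1}-x^k)$. This simplifies to $\sigma^{-1}(x^{k+1}-x^k)=Ay^k+Bz^{k+1}-c$, which is precisely \eqref{alg-x}.

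The only delicate point is this bookkeeping: the update order $z\to x\to y$ causes the $y$-row to pick up the cross terms $\sigma A^*A$ and $A^*$, and these must be paired with the off-diagonal entries of $\mathcal M$. The subproblems are well defined, with unique minimizers, by Assumption~\ref{assume-wellposed}, and the subdifferential sum rule applies because the added terms are smooth.

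\emph{Positive semidefiniteness.} For $w=(y,z,x)$,
\[
\langle w,\mathcal Mw\rangle=\|y\|^2_{\mathcal P_f}+\|z\|^2_{\mathcal P_g}+\sigma\|Ay\|^2+2\langle Ay,x\rangle+\sigma^{-1}\|x\|^2 .
\]
Since $\sigma\|Ay\|^2+2\langle Ay,x\rangle+\sigma^{-1}\|x\|^2=\sigma^{-1}\|\sigma Ay+x\|^2\ge0$, we get $\mathcal M\succeq0$ whenever $\mathcal P_f\succeq0$ and $\mathcal P_g\succeq0$.
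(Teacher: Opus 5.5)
Your proposal is correct and follows the paper's proof essentially step for step. You write the block-wise first-order optimality conditions of the $z$-, $x$- and $y$-updates, substitute the multiplier update to produce the off-diagonal $A^*$ and $\sigma A^*A$ entries of $\mathcal M$, and complete the square $\sigma^{-1}\|\sigma Ay+x\|^2$ to get $\mathcal M\succeq0$; the only cosmetic difference is that you prove \eqref{dPPA-equivalent} first and obtain \eqref{dPPA} by adding and subtracting $\mathcal F_{sm}(w^{k+1})$, whereas the paper inserts $\nabla f(y^{k+1})$ and $\nabla g(z^{k+1})$ row by row.
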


\begin{proof} 
The proof follows by examining the first-order optimality conditions corresponding to each update \eqref{alg-admm}. We consider the updates one by one.

For the $z$-update, the first-order optimality condition of subproblem \eqref{alg-z} is
$$
0 \in \partial q(z^{k+1})
+\nabla g(z^k)
+\widehat{\Sigma}_g(z^{k+1}-z^k)
+B^*x^k
+\sigma B^*(Ay^k+Bz^{k+1}-c)
+\mathcal{T}(z^{k+1}-z^k).
$$
By the $x$-update \eqref{alg-x}, the above inclusion can be equivalently written as
$$
0 \in \partial q(z^{k+1})
+\nabla g(z^k)
+B^*x^{k+1}
+(\mathcal{T}+\widehat{\Sigma}_g)(z^{k+1}-z^k).
$$
It follows that
\begin{equation}\label{th2.1-eq-10}
\begin{array}{l}
0 \in \underbrace{\partial q(z^{k+1}) +\nabla g(z^{k+1})
+B^*x^{k+1}}_{\mathcal{R}_z(w^{k+1})}\\
~~~~~+\underbrace{(\mathcal{T}+\widehat{\Sigma}_g)(z^{k+1}-z^k)}_{\mathcal{M}_{zz}}
+\underbrace{\nabla g(z^k)-\nabla g(z^{k+1})}_{\mathcal{E}_z^k}.
\end{array}
\end{equation}

Second, for the $x$-update, rearranging \eqref{alg-x} yields
$$
\sigma^{-1}(x^{k+1}-x^k)
=Ay^k+Bz^{k+1}-c
=(Ay^{k+1}+Bz^{k+1}-c)\mathbin{+}{A(y^k-y^{k+1})},
$$
and hence
\begin{equation}\label{th2.1-eq-11}
0=
\underbrace{-Ay^{k+1}-Bz^{k+1}+c}_{\mathcal{R}_x(w^{k+1})}
+\underbrace{A}_{\mathcal{M}_{xy}}(y^{k+1}-y^k)
+\underbrace{\sigma^{-1}I}_{\mathcal{M}_{xx}}(x^{k+1}-x^k).
\end{equation}

Third, for the $y$-update, the first-order optimality condition of subproblem \eqref{alg-y} is
$$
\begin{aligned}
0 \in\;&
\partial p(y^{k+1})
+\nabla f(y^k)
+\widehat{\Sigma}_f (y^{k+1}-y^k)
+A^*x^{k+1} \\
&+\sigma A^*(Ay^{k+1}+Bz^{k+1}-c)
+\mathcal{S}(y^{k+1}-y^k).
\end{aligned}
$$
Using \eqref{alg-x}, the above inclusion can be rewritten as
$$
\begin{aligned}
0 \in\;&
\partial p(y^{k+1})
+\nabla f(y^k)
+A^*x^{k+1}
+A^*(x^{k+1}-x^k)\\
&+\sigma A^*A(y^{k+1}-y^k)
+(\mathcal{S}+\widehat{\Sigma}_f)(y^{k+1}-y^k).
\end{aligned}
$$
It follows that
\begin{equation}\label{th2.1-eq-12}
\begin{aligned}
0 \in\;&
\underbrace{\partial p(y^{k+1})
+\nabla f(y^{k+1})
+A^*x^{k+1}}_{\mathcal{R}_y(w^{k+1})}
+\underbrace{(\mathcal{S}+\widehat{\Sigma}_f+\sigma A^*A)
(y^{k+1}-y^k)}_{\mathcal{M}_{yy}} \\
&+
\underbrace{A^*}_{\mathcal{M}_{yx}}
(x^{k+1}-x^k)
+\underbrace{\nabla f(y^k)-\nabla f(y^{k+1})}_{\mathcal{E}_y^k}.
\end{aligned}
\end{equation}
Combining \eqref{th2.1-eq-10}--\eqref{th2.1-eq-12} yields exactly the matrix-vector representation \eqref{dPPA}.

Finally, it remains to show that $\mathcal{M}$ is positive semidefinite. For any
$w=(y,z,x)$, we have
$$
\begin{aligned}
\langle w,\mathcal{M}w\rangle
&=
\langle y,(\mathcal{S}+\widehat{\Sigma}_f+\sigma A^*A)y\rangle
+\langle z,(\mathcal{T}+\widehat{\Sigma}_g)z\rangle \\
&\quad
+\langle x,\sigma^{-1}x\rangle
+\langle y,A^*x\rangle
+\langle x,Ay\rangle \\
&=
\|y\|_{\mathcal{S}+\widehat{\Sigma}_f}^2
+\|z\|_{\mathcal{T}+\widehat{\Sigma}_g}^2
+\sigma\|Ay\|^2
+\sigma^{-1}\|x\|^2
+2\langle Ay,x\rangle \\
&=
\|y\|_{\mathcal{S}+\widehat{\Sigma}_f}^2
+\|z\|_{\mathcal{T}+\widehat{\Sigma}_g}^2
+\|\sigma^{1/2}Ay+\sigma^{-1/2}x\|^2.
\end{aligned}
$$
Since
$\mathcal{S}+\widehat{\Sigma}_f \succeq 0$
and
$\mathcal{T}+\widehat{\Sigma}_g \succeq 0$,
it follows immediately that
$\langle w,\mathcal{M}w\rangle \ge 0$. Hence, $\mathcal{M}\succeq 0$. The proof is complete.
\end{proof}

The reformulation in Proposition~\ref{prop:DPPA-Equiv} suggests defining a fixed-point mapping through the inverse of $\mathcal M+\mathcal R_{ns}$. For this construction to be well posed, we need the corresponding resolvent-type mapping to be single-valued. The following proposition establishes this property, together with global Lipschitz continuity, under a natural positive definiteness condition.

\begin{proposition}\label{prop-resolvent}
Under Assumption~\ref{assume-wellposed}, the inverse $(\mathcal M+\mathcal R_{ns})^{-1}:\mathbb H\to\mathbb H$ has full domain, is single-valued, and is globally Lipschitz continuous.
\end{proposition}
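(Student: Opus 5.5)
The plan is to solve $v\in(\mathcal M+\mathcal R_{ns})(w)$ explicitly, block by block, in the same $z\to x\to y$ order used in \eqref{alg-admm}. Each block will reduce to a strongly convex subproblem with a unique solution that depends Lipschitz-continuously on the data. This gives existence, uniqueness and Lipschitz continuity at the same time.

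\textbf{Writing out the inclusion.} Fix $v=(v_y,v_z,v_x)\in\mathbb H$ and write $w=(y,z,x)$. The inclusion $v\in(\mathcal M+\mathcal R_{ns})(w)$ reads
\begin{align*}
v_y&\in \partial p(y)+A^*x+\mathcal P_f y+\sigma A^*Ay+A^*x,\\
v_z&\in \partial q(z)+B^*x+\mathcal P_g z,\\
v_x&= -Ay-Bz+c+Ay+\sigma^{-1}x .
\end{align*}
The third line is linear and gives $x=\sigma(v_x+Bz-c)$. The $y$-terms cancel there, mirroring the fact that the $x$-update uses $y^k$.

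\textbf{The $z$-block.} Substituting $x$ into the second line gives
\[
v_z-\sigma B^*(v_x-c)\in\partial q(z)+(\mathcal P_g+\sigma B^*B)z.
\]
Since $\mathcal P_g+\sigma B^*B\succ0$ by Assumption~\ref{assume-wellposed}, this is the optimality condition of a strongly convex problem, with modulus $\lambda_{\min}(\mathcal P_g+\sigma B^*B)>0$. Equivalently, $z$ is the resolvent of the maximal monotone operator $\partial q$ in the metric $G:=\mathcal P_g+\sigma B^*B$:
\[
z=(G+\partial q)^{-1}(\cdot).
\]
Maximal monotonicity of $\partial q$ holds because $q$ is closed, proper and convex. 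Minty's theorem, in the form $\operatorname{ran}(G+\partial q)=\mathbb Z$ for $G\succ0$, gives a unique solution $z$. Strong monotonicity gives
\[
\|z-z'\|\le \lambda_{\min}(G)^{-1}\|r-r'\|,
\]
where $r,r'$ are the corresponding right-hand sides. Those right-hand sides are affine in $v$, so $z$ is Lipschitz in $v$, and then so is $x=\sigma(v_x+Bz-c)$.

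\textbf{The $y$-block.} Substituting into the first line gives
\[
v_y-2A^*x\in\partial p(y)+(\mathcal P_f+\sigma A^*A)y.
\]
By the same argument with $\mathcal P_f+\sigma A^*A\succ0$, there is a unique $y$, and it is Lipschitz in $(v_y,x)$, hence in $v$.

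\textbf{Conclusion.} Composing the three Lipschitz maps shows that $(\mathcal M+\mathcal R_{ns})^{-1}$ is defined on all of $\mathbb H$, single-valued, and globally Lipschitz. The main point needing care is the $z$-block. $\mathcal M$ itself may be singular, so one cannot argue through monotonicity of $\mathcal M+\mathcal R_{ns}$ on $\mathbb H$. The positive-definiteness in \eqref{eq:wellposed-curvature} enters only after eliminating $x$ via the explicit linear relation above. I would also double-check the coefficient of $A^*x$ in the $y$-line: it collects one copy from $\mathcal R_{ns}$ and one from $\mathcal M$. Its exact value does not matter for the argument, since it only shifts the right-hand side by a Lipschitz term.
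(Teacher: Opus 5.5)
Your proposal is correct and follows essentially the same route as the paper: you eliminate the variables in the order $z\to x\to y$ and obtain the same reduced inclusions governed by $\mathcal P_g+\sigma B^*B\succ0$ and $\mathcal P_f+\sigma A^*A\succ0$. The coefficient $2A^*x$ in the $y$-block is indeed correct, and, as in the paper, you use surjectivity and strong monotonicity of the shifted subdifferentials to obtain existence, uniqueness, and Lipschitz dependence; the paper's explicit chained Lipschitz estimates are just a quantitative version of your composition argument.
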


\begin{proof}
Set
$Q_y:=\mathcal P_f+\sigma A^*A\,\,\text{and}\,\, Q_z:=\mathcal P_g+\sigma B^*B.$
For $v=(v_y,v_z,v_x)\in\mathbb H$, the inclusion $v\in(\mathcal M+\mathcal R_{ns})(y,z,x)$ is equivalent to
\begin{subequations}\label{eq:resolvent-blocks}
\begin{align}
v_y&\in\partial p(y)+Q_y y+2A^*x,\label{eq:resolvent-y}\\
v_z&\in\partial q(z)+\mathcal P_g z+B^*x,\label{eq:resolvent-z}\\
v_x&=-Bz+c+\sigma^{-1}x.\label{eq:resolvent-x}
\end{align}
\end{subequations}
In particular, the $Ay$ terms in the third block of $\mathcal M$ and $\mathcal R_{ns}$ cancel.  Eliminating the variables in the order $z\to x\to y$ gives
\begin{subequations}\label{eq:resolvent-triangular}
\begin{align}
z&=(\partial q+Q_z)^{-1}
\bigl(v_z-\sigma B^*(v_x-c)\bigr),\label{eq:resolvent-triangular-z}\\
x&=\sigma(v_x-c+Bz),\label{eq:resolvent-triangular-x}\\
y&=(\partial p+Q_y)^{-1}(v_y-2A^*x).
\label{eq:resolvent-triangular-y}
\end{align}
\end{subequations}

By Assumption~\ref{assume-wellposed}, $Q_y\succ0$ and $Q_z\succ0$. Hence $\partial p+Q_y$ and $\partial q+Q_z$ are strongly monotone and surjective, and their inverses are everywhere defined, single-valued, and globally Lipschitz. Thus \eqref{eq:resolvent-triangular} defines exactly one $(y,z,x)$ for every $v\in\mathbb H$, which proves that the mapping is well-defined and single-valued on the entire space.

For completeness, let $\mu_y:=\lambda_{\min}(Q_y)>0$ and
$\mu_z:=\lambda_{\min}(Q_z)>0$.  For two inputs $v_1,v_2$ and their images
$w_i=(y_i,z_i,x_i)$, strong monotonicity and
\eqref{eq:resolvent-triangular} yield
\begin{align*}
&\|z_1-z_2\| \leq \mu_z^{-1}
\bigl(\|v_{1z}-v_{2z}\|+\sigma\|B\|\|v_{1x}-v_{2x}\|\bigr),\\
&\|x_1-x_2\| \leq \sigma\|v_{1x}-v_{2x}\|+\sigma\|B\|\|z_1-z_2\|,\\
&\|y_1-y_2\| \leq \mu_y^{-1}\bigl(\|v_{1y}-v_{2y}\|+2\|A\|\|x_1-x_2\|\bigr).
\end{align*}
Combining these three estimates gives a constant $L_J>0$ such that
$$
\|(\mathcal M+\mathcal R_{ns})^{-1}v_1
 -(\mathcal M+\mathcal R_{ns})^{-1}v_2\|
\leq L_J\|v_1-v_2\|.
$$
This completes the proof.
\end{proof}

With Proposition~\ref{prop-resolvent}, the reformulation in Proposition~\ref{prop:DPPA-Equiv} can now be expressed as a fixed-point iteration. Indeed, from \eqref{dPPA-equivalent},
we obtain
$$
(\mathcal M-\mathcal F_{sm})w^k \in
(\mathcal M+\mathcal R_{ns})w^{k+1}.
$$
This motivates the definition of the composite mapping
\begin{equation}\label{def-T}
\mathbf T
:=
(\mathcal M+\mathcal R_{ns})^{-1}(\mathcal M-\mathcal F_{sm}).
\end{equation}
Thus, the perturbed degenerate proximal point reformulation equivalently
yields a preconditioned forward--backward mapping, with $\mathcal F_{sm}$
evaluated explicitly and $\mathcal R_{ns}$ treated implicitly.

Under the assumptions of Proposition~\ref{prop-resolvent}, the mapping $\mathbf T$ is well defined. Consequently, the operator-theoretic reformulation developed above naturally induces a fixed-point mapping associated with the majorized ADMM iteration framework \eqref{alg-admm}. Define
$$
\operatorname{Fix}(\mathbf T):=\{\,w:\mathbf T(w)=w\,\}.
$$

The following theorem shows that this fixed-point mapping provides an exact characterization of the KKT solution set of problem~\eqref{model}. This result serves as the cornerstone for the operator-theoretic convergence analysis developed in the next section.

\begin{theorem}\label{th-fixM}
Under Assumption~\ref{assume-wellposed}, the fixed-point set of $\mathbf T$ coincides with the KKT solution set, that is, 
$\operatorname{Fix}(\mathbf T)=\Omega^*$.
\end{theorem}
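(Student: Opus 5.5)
The plan is a direct double inclusion, using only the definition \eqref{def-T} of $\mathbf T$ and the single-valuedness from Proposition~\ref{prop-resolvent}. Under Assumption~\ref{assume-wellposed}, $(\mathcal M+\mathcal R_{ns})^{-1}$ is everywhere defined and single-valued. Hence for any $w,u\in\mathbb H$ we have the equivalence
\[
u=\mathbf T(w)\iff (\mathcal M-\mathcal F_{sm})(w)\in(\mathcal M+\mathcal R_{ns})(u).
\]
Here the forward part $\mathcal F_{sm}$ is single-valued, since $f$ and $g$ are differentiable, so $(\mathcal M-\mathcal F_{sm})(w)$ is a single vector. Because $\mathcal M$ is linear and the sum is taken pointwise, the right-hand inclusion reads $\mathcal M w-\mathcal F_{sm}(w)\in\mathcal M u+\mathcal R_{ns}(u)$. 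This is the same as $0\in\mathcal R_{ns}(u)+\mathcal F_{sm}(w)+\mathcal M(u-w)$, which matches \eqref{dPPA-equivalent}.

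For the inclusion $\operatorname{Fix}(\mathbf T)\subseteq\Omega^*$, take $w$ with $\mathbf T(w)=w$ and set $u=w$. The term $\mathcal M(u-w)$ vanishes, and we obtain $0\in\mathcal R_{ns}(w)+\mathcal F_{sm}(w)=\mathcal R(w)$ by \eqref{17}. Hence $w\in\Omega^*$ by \eqref{def-Omega}.

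For the reverse inclusion, take $w^*\in\Omega^*$, so that $0\in\mathcal R_{ns}(w^*)+\mathcal F_{sm}(w^*)$. Adding $\mathcal M w^*$ to both sides gives $(\mathcal M-\mathcal F_{sm})(w^*)\in(\mathcal M+\mathcal R_{ns})(w^*)$. Thus $w^*$ belongs to $(\mathcal M+\mathcal R_{ns})^{-1}\bigl((\mathcal M-\mathcal F_{sm})w^*\bigr)$. By single-valuedness this set is exactly $\{\mathbf T(w^*)\}$, so $\mathbf T(w^*)=w^*$.

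The only delicate point, which I expect to be the main obstacle, is this last step. Degeneracy of $\mathcal M$ does not by itself ensure that the preimage is a single point, which is why Proposition~\ref{prop-resolvent} is needed. That proposition relies on $Q_y\succ0$ and $Q_z\succ0$ in Assumption~\ref{assume-wellposed}; without it, $w^*$ would only be one element of a possibly larger set. A second point to state explicitly is that the set-valued sum $\mathcal R_{ns}+\mathcal F_{sm}$ coincides with $\mathcal R$ as defined in \eqref{def-R}. This holds componentwise because $\partial p+\nabla f$ and $\partial q+\nabla g$ are sums of a set and a single vector, so the two operators have the same zeros. No further machinery is needed.
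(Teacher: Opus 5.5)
Your proposal is correct and follows essentially the same argument as the paper. The paper writes the single chain $w=\mathbf T w \Longleftrightarrow (\mathcal M-\mathcal F_{sm})w\in(\mathcal M+\mathcal R_{ns})w \Longleftrightarrow 0\in\mathcal R_{ns}(w)+\mathcal F_{sm}(w)$, using single-valuedness from Proposition~\ref{prop-resolvent} and cancelling $\mathcal M w$ without inverting $\mathcal M$; your double-inclusion version additionally makes explicit that single-valuedness is what drives the direction $\Omega^*\subseteq\operatorname{Fix}(\mathbf T)$.
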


\begin{proof}
By Proposition~\ref{prop-resolvent}, $\mathbf T$ is a single-valued mapping on
$\mathbb H$.  For any $w\in\mathbb H$,
\begin{align*}
w=\mathbf T w &\quad\Longleftrightarrow\quad (\mathcal M-\mathcal F_{sm})w \in(\mathcal M+\mathcal R_{ns})w\\
&\quad\Longleftrightarrow\quad 0\in\mathcal R_{ns}(w)+\mathcal F_{sm}(w)\\
&\quad\Longleftrightarrow\quad w\in\Omega^*.
\end{align*}
The cancellation of $\mathcal Mw$ is valid without requiring $\mathcal M$ to be invertible.
\end{proof}

Theorem~\ref{th-fixM} completes the fixed-point characterization of the majorized ADMM by establishing the exact correspondence between the fixed points of $\mathbf T$ and the KKT solutions of problem~\eqref{model}. This characterization provides the foundation for the subsequent analysis of the mapping $\mathbf T$ and the convergence properties of the associated fixed-point iteration.

\section{Convergence Analysis of the Fixed-Point Iteration}

Building upon the operator-theoretic reformulation established in Section~2, we now study the convergence properties of the sequence $\{w^k\}$ generated by the fixed-point iteration
$$
w^{k+1}=\mathbf T(w^k), \qquad k=0,1,\ldots,
$$
starting from an arbitrary initial point $w^0\in\mathbb H$. The main challenge is that the preconditioning operator $\mathcal M$ is allowed to be positive semidefinite and hence may induce only a seminorm, so standard fixed-point arguments in a Hilbertian norm cannot be applied directly. To address this degeneracy, we first introduce a factorization of $\mathcal M$ and formulate an appropriate cocoercivity condition for the smooth operator $\mathcal F_{sm}$ with respect to the Moore--Penrose pseudoinverse of $\mathcal M$. These ingredients will then be used to establish the nonexpansiveness of $\mathbf T$ in the $\mathcal M$-seminorm and the convergence of the resulting fixed-point iteration.

\begin{proposition}\cite[Proposition 2.3]{bredies2022degenerate}
Let \(\mathcal{M} : \mathbb{H} \to \mathbb{H}\) be a linear, bounded, self-adjoint, and positive semidefinite operator. 
Then, there exists a bounded and injective operator \({\cal C} : \mathbb{U} \to \mathbb{H}\) for some real Hilbert space \(\mathbb{U}\), such that 
$$
\mathcal{M} = {\cal C}{\cal C}^*,
$$
where \({\cal C}^* : \mathbb{H} \to \mathbb{U}\) is the adjoint of \({\cal C}\). 
Moreover, if \(\mathcal{M}\) has closed range, then \({\cal C}^*\) is onto.
\end{proposition}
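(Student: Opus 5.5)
The plan is to take the square root of $\mathcal M$ and then restrict it to the closure of its range. Since $\mathcal M$ is bounded, self-adjoint and positive semidefinite, the continuous functional calculus (or, in the finite-dimensional setting of this paper, the spectral theorem) gives a unique bounded, self-adjoint, positive semidefinite square root $\mathcal M^{1/2}$ with $\mathcal M=\mathcal M^{1/2}\mathcal M^{1/2}$. It satisfies $\ker(\mathcal M^{1/2})=\ker(\mathcal M)$, because $\|\mathcal M^{1/2}w\|^2=\langle w,\mathcal Mw\rangle$.

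Set $\mathbb U:=\overline{\operatorname{Im}(\mathcal M^{1/2})}=\ker(\mathcal M^{1/2})^\perp$, a closed subspace of $\mathbb H$ and hence a real Hilbert space. Define ${\cal C}:=\mathcal M^{1/2}|_{\mathbb U}:\mathbb U\to\mathbb H$. This map is bounded. It is also injective: if $u\in\mathbb U$ and $\mathcal M^{1/2}u=0$, then $u\in\ker(\mathcal M^{1/2})\cap\ker(\mathcal M^{1/2})^\perp=\{0\}$.

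Next we compute the adjoint. For $u\in\mathbb U$ and $h\in\mathbb H$, self-adjointness gives
\[
\langle {\cal C}u,h\rangle=\langle u,\mathcal M^{1/2}h\rangle .
\]
Since $\mathcal M^{1/2}h\in\mathbb U$, this shows ${\cal C}^*h=\mathcal M^{1/2}h$, viewed as an element of $\mathbb U$. Therefore ${\cal C}{\cal C}^*h=\mathcal M^{1/2}\mathcal M^{1/2}h=\mathcal Mh$, which is the factorization $\mathcal M={\cal C}{\cal C}^*$.

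For the last claim, assume $\operatorname{Im}(\mathcal M)$ is closed. We must show that $\operatorname{Im}(\mathcal M^{1/2})$ is closed, so that ${\cal C}^*$ maps onto $\mathbb U$. We have the inclusions
\[
\operatorname{Im}(\mathcal M)\subseteq\operatorname{Im}(\mathcal M^{1/2})\subseteq\overline{\operatorname{Im}(\mathcal M^{1/2})}=\ker(\mathcal M^{1/2})^\perp=\ker(\mathcal M)^\perp=\overline{\operatorname{Im}(\mathcal M)} .
\]
Closedness of $\operatorname{Im}(\mathcal M)$ makes the two ends of this chain coincide, so every inclusion is an equality and $\operatorname{Im}(\mathcal M^{1/2})=\mathbb U$. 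In the finite-dimensional setting used in this paper all ranges are closed, so this step is immediate.

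The main obstacle is justifying that chain of range inclusions in infinite dimensions, in particular the identification $\ker(\mathcal M^{1/2})=\ker(\mathcal M)$ and the closed-range equivalence. Both follow from the identity $\|\mathcal M^{1/2}w\|^2=\langle w,\mathcal Mw\rangle$ together with the closed range theorem. Alternatively, one can simply invoke \cite[Proposition 2.3]{bredies2022degenerate}, as the paper does.
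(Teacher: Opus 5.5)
Your proof is correct, and it is the standard square-root construction ($\mathbb U=\overline{\operatorname{Im}(\mathcal M^{1/2})}$, ${\cal C}=\mathcal M^{1/2}|_{\mathbb U}$, ${\cal C}^*=\mathcal M^{1/2}$) on which the cited result of Bredies et al.\ rests; the paper gives no argument of its own and simply invokes that reference. Your chain of inclusions already proves the closed-range claim by itself, since $\overline{\operatorname{Im}T}=\ker(T^*)^\perp$ is a basic Hilbert-space fact, so the closed range theorem you mention at the end is not needed.
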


\begin{assumption}\label{assume-coco}
There exists $\beta\in(0,1]$ such that, for every $w_1,w_2\in\mathbb H$,
the variations
$$
\Delta w:=w_1-w_2\,\,\text{and}\,\, \Delta\mathcal F_{sm}:=\mathcal F_{sm}(w_1)-\mathcal F_{sm}(w_2)
$$
satisfy both the range-compatibility condition
\begin{equation}\label{assume-range}
\Delta\mathcal F_{sm}\in\operatorname{Im}(\mathcal M)
\end{equation}
and the metric cocoercivity inequality
\begin{equation}\label{assume-coco-Ieq}
\langle \Delta\mathcal F_{sm},\Delta w\rangle
\geq
\frac{1}{2\beta}
\|\Delta\mathcal F_{sm}\|_{\mathcal M^\dagger}^{2}.
\end{equation}
Here $\mathcal M^\dagger$ is the Moore--Penrose pseudoinverse of $\mathcal M$.
\end{assumption}

\begin{remark}\label{rem:coco}
Finite dimensionality makes $\operatorname{Im}(\mathcal M)$ closed.
Condition \eqref{assume-range} is nevertheless an
essential part of Assumption~\ref{assume-coco}: it makes the generalized
Cauchy--Schwarz and Young inequalities applicable to every smooth-operator
variation.  The required condition is on
differences of $\mathcal F_{sm}$, rather than on the absolute range
$\operatorname{Im}(\mathcal F_{sm})$.
\end{remark}

\begin{proposition}\label{prop:matrix-sufficient-coco}
Let $\widehat\Sigma_f$ and $\widehat\Sigma_g$ be the self-adjoint positive semidefinite majorization operators in \eqref{eq:maj_f}--\eqref{eq:maj_g}. Assume that there exists $\beta\in(0,1]$ such that
\begin{equation}\label{eq:matrix-sufficient-cond}
\mathcal P_f\succeq \frac{1}{2\beta}\widehat\Sigma_f\,\,\,\text{and}\,\, \,\mathcal P_g\succeq \frac{1}{2\beta}\widehat\Sigma_g.
\end{equation}
Then $\mathcal P_f\succeq0$, $\mathcal P_g\succeq0$, and both \eqref{assume-range} and \eqref{assume-coco-Ieq} hold.  Thus
Assumption~\ref{assume-coco} is satisfied.
\end{proposition}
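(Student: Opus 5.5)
The plan is to reduce Assumption~\ref{assume-coco} to two ingredients. The first is a blockwise cocoercivity and range property of $\nabla f$ and $\nabla g$ with respect to $\widehat\Sigma_f$ and $\widehat\Sigma_g$, which comes from the majorization \eqref{eq:majorization-inequalities}. The second is an explicit computation of $\|\cdot\|_{\mathcal M^\dagger}$ on vectors of the form $(u,v,0)$. The claim $\mathcal P_f\succeq0$, $\mathcal P_g\succeq0$ is immediate from \eqref{eq:matrix-sufficient-cond}, since $\widehat\Sigma_f,\widehat\Sigma_g\succeq0$. The same inequality gives $\ker\mathcal P_f\subseteq\ker\widehat\Sigma_f$, hence $\operatorname{Im}\widehat\Sigma_f\subseteq\operatorname{Im}\mathcal P_f$, and likewise for $g$.

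\emph{Step 1 (gradient lemma).} Fix $b$ and set $\varphi(y):=f(y)-\langle\nabla f(b),y\rangle$. This function is convex, minimized at $b$, and inherits the majorization $\varphi(a+h)\le\varphi(a)+\langle d,h\rangle+\frac12\|h\|^2_{\widehat\Sigma_f}$ with $d:=\nabla f(a)-\nabla f(b)$. Hence
\[
\varphi(b)\le\varphi(a)+\langle d,h\rangle+\tfrac12\|h\|^2_{\widehat\Sigma_f}\quad\text{for all } h .
\]
If $d$ had a nonzero component $d_0\in\ker\widehat\Sigma_f$, the choice $h=-td_0$ with $t\to\infty$ would make the right-hand side tend to $-\infty$, a contradiction. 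So $d\in\operatorname{Im}\widehat\Sigma_f$. Taking $h=-\widehat\Sigma_f^\dagger d$ gives
\[
\varphi(a)-\varphi(b)\ge\tfrac12\|d\|^2_{\widehat\Sigma_f^\dagger},
\]
that is, $f(a)-f(b)-\langle\nabla f(b),a-b\rangle\ge\frac12\|d\|^2_{\widehat\Sigma_f^\dagger}$. Adding this to the same inequality with $a$ and $b$ swapped yields
\[
\langle\nabla f(a)-\nabla f(b),a-b\rangle\ge\|\nabla f(a)-\nabla f(b)\|^2_{\widehat\Sigma_f^\dagger}.
\]
The same argument applies to $g$. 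I expect this step, handling the degenerate $\widehat\Sigma_f$ correctly, to be the main obstacle; the unboundedness argument along $\ker\widehat\Sigma_f$ is what resolves it.

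\emph{Step 2 (range and the $\mathcal M^\dagger$-norm).} Write $\Delta\mathcal F_{sm}=(u,v,0)$ with $u\in\operatorname{Im}\widehat\Sigma_f\subseteq\operatorname{Im}\mathcal P_f$ and $v\in\operatorname{Im}\widehat\Sigma_g\subseteq\operatorname{Im}\mathcal P_g$. Define
\[
w'=(y',z',x'):=\bigl(\mathcal P_f^\dagger u,\ \mathcal P_g^\dagger v,\ -\sigma A\mathcal P_f^\dagger u\bigr).
\]
The third row of $\mathcal M w'$ is $Ay'+\sigma^{-1}x'=0$. The first row is $\mathcal P_f y'+\sigma A^*Ay'+A^*x'=\mathcal P_f y'=u$, and the second row is $\mathcal P_g z'=v$. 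So $\mathcal M w'=\Delta\mathcal F_{sm}$, which proves \eqref{assume-range}. Because $\mathcal M\mathcal M^\dagger\mathcal M=\mathcal M$, we get
\[
\|\Delta\mathcal F_{sm}\|^2_{\mathcal M^\dagger}=\langle w',\mathcal M w'\rangle=\langle u,\mathcal P_f^\dagger u\rangle+\langle v,\mathcal P_g^\dagger v\rangle .
\]

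\emph{Step 3 (comparison).} For $u\in\operatorname{Im}\mathcal P_f$ use the variational formula
\[
\langle u,\mathcal P_f^\dagger u\rangle=\sup_y\{2\langle u,y\rangle-\langle y,\mathcal P_f y\rangle\}.
\]
Bounding $\mathcal P_f$ below by $\frac1{2\beta}\widehat\Sigma_f$ and using $u\in\operatorname{Im}\widehat\Sigma_f$ gives
\[
\langle u,\mathcal P_f^\dagger u\rangle\le\sup_y\{2\langle u,y\rangle-\tfrac1{2\beta}\langle y,\widehat\Sigma_f y\rangle\}=2\beta\langle u,\widehat\Sigma_f^\dagger u\rangle .
\]
The analogous bound holds for $v$. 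Combining this with Step 1,
\[
\tfrac1{2\beta}\|\Delta\mathcal F_{sm}\|^2_{\mathcal M^\dagger}\le\|u\|^2_{\widehat\Sigma_f^\dagger}+\|v\|^2_{\widehat\Sigma_g^\dagger}\le\langle u,\Delta y\rangle+\langle v,\Delta z\rangle=\langle\Delta\mathcal F_{sm},\Delta w\rangle,
\]
which is \eqref{assume-coco-Ieq}.
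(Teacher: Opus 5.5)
Your proof is correct and follows the same architecture as the paper's: range and $\widehat\Sigma_f^\dagger$-cocoercivity of gradient differences, the inclusion $\operatorname{Im}\widehat\Sigma_f\subseteq\operatorname{Im}\mathcal P_f$ obtained from $\ker\mathcal P_f\subseteq\ker\widehat\Sigma_f$, the variational comparison $\|u\|_{\mathcal P_f^\dagger}^2\le2\beta\|u\|_{\widehat\Sigma_f^\dagger}^2$, and the block formula for $\|(u,v,0)\|_{\mathcal M^\dagger}^2$ built on the preimage whose third component is $-\sigma A\xi$. The differences are minor. You derive the gradient lemma in a self-contained Baillon--Haddad fashion, obtaining both the range property and the cocoercivity from the single inequality $\varphi(b)\le\varphi(a)+\langle d,h\rangle+\frac12\|h\|_{\widehat\Sigma_f}^2$, whereas the paper cites \cite[Lemma 3.2]{zhang2020linearly} and proves the range property separately via affineness of $f$ along $\ker\widehat\Sigma_f$. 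You also read off the $\mathcal M^\dagger$-norm directly from $\mathcal M\mathcal M^\dagger\mathcal M=\mathcal M$ instead of maximizing over $\zeta$.
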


\begin{proof}
First, we show that the gradient difference of $f$ lies in the range of $\widehat\Sigma_f$. Let
$$
\Delta y:=y_1-y_2\,\,\text{and}\,\, \Delta G_f:=\nabla f(y_1)-\nabla f(y_2).
$$
For any $d\in\ker(\widehat\Sigma_f)$, the majorization inequality \eqref{eq:majorization-inequalities} gives
$$
f(y+td)\leq f(y)+t\langle\nabla f(y),d\rangle,\qquad\forall y\in\mathbb Y,\quad \forall t\in\mathbb R.
$$
On the other hand, by convexity of the function $f$,
$$
f(y+td)\geq f(y)+t\langle\nabla f(y),d\rangle.
$$
Thus equality holds for every $t\in\mathbb R$, which implies that $f$ is affine along each direction in $\ker(\widehat\Sigma_f)$. Hence
$$
\langle\nabla f(y_1)-\nabla f(y_2),d\rangle=0, \qquad
\forall d\in\ker(\widehat\Sigma_f).
$$
Since the space is finite-dimensional and $\widehat\Sigma_f$ is self-adjoint,
$\operatorname{Im}(\widehat\Sigma_f)=\ker(\widehat\Sigma_f)^\perp$. Therefore,
\begin{equation}\label{eq:range-sigma-f}
\Delta G_f\in\operatorname{Im}(\widehat\Sigma_f).
\end{equation}
Similarly, for $\Delta z:=z_1-z_2$  and $\Delta G_g:=\nabla g(z_1)-\nabla g(z_2)$, one has
\begin{equation}\label{eq:range-sigma-g}
\Delta G_g\in\operatorname{Im}(\widehat\Sigma_g).
\end{equation}

Next, by \cite[Lemma 3.2]{zhang2020linearly}, for any $y_1,y_2,\delta\in\mathbb Y$,
\begin{equation}\label{eq:maj-lemma-f}
\left\langle \nabla f(y_1)-\nabla f(y_2), \delta-y_2 \right\rangle \geq -\frac14 \|y_1-\delta\|_{\widehat\Sigma_f}^2 .
\end{equation}
Taking $\delta=y_1-2\widehat\Sigma_f^\dagger\Delta G_f$ in \eqref{eq:maj-lemma-f}, we get
$$
\left\langle \Delta G_f,\Delta y-2\widehat\Sigma_f^\dagger\Delta G_f
\right\rangle\geq-\frac{1}{4}\left\|2\widehat\Sigma_f^\dagger\Delta G_f\right\|_{\widehat\Sigma_f}^2.
$$
By \eqref{eq:range-sigma-f}, it holds that
$$
\left\|\widehat\Sigma_f^\dagger\Delta G_f\right\|_{\widehat\Sigma_f}^2 =\|\Delta G_f\|_{\widehat\Sigma_f^\dagger}^2.
$$
Thus,
$$
\left\langle \Delta G_f,\Delta y \right\rangle-2\|\Delta G_f\|_{\widehat\Sigma_f^\dagger}^2 \geq-\|\Delta G_f\|_{\widehat\Sigma_f^\dagger}^2,
$$
and hence
\begin{equation}\label{eq:f-coco-Sigma}
\left\langle \Delta G_f,\Delta y \right\rangle \geq \|\Delta G_f\|_{\widehat\Sigma_f^\dagger}^2.
\end{equation}
Similarly, using
\eqref{eq:majorization-inequalities}, \eqref{eq:range-sigma-g}, and \cite[Lemma 3.2]{zhang2020linearly}, one has
\begin{equation}\label{eq:g-coco-Sigma}
\left\langle \Delta G_g,\Delta z\right\rangle\geq \|\Delta G_g\|_{\widehat\Sigma_g^\dagger}^2.
\end{equation}

Since $\mathcal P_f\succeq \frac{1}{2\beta}\widehat\Sigma_f\succeq0$, for any
$d\in\ker(\mathcal P_f)$, we have
$$
0 = \langle d,\mathcal P_fd\rangle \geq \frac{1}{2\beta} \langle d,\widehat\Sigma_f d\rangle
\geq 0.
$$
Hence $\langle d,\widehat\Sigma_f d\rangle=0$. Furthermore, $\widehat\Sigma_f\succeq0$ implies
$\widehat\Sigma_f d=0$, and therefore
$$
\ker(\mathcal P_f)\subseteq\ker(\widehat\Sigma_f).
$$
Taking orthogonal complements and using finite dimensionality and self-adjointness yields
$$
\operatorname{Im}(\widehat\Sigma_f)= \ker(\widehat\Sigma_f)^\perp\subseteq \ker(\mathcal P_f)^\perp=\operatorname{Im}(\mathcal P_f).
$$
This, together with \eqref{eq:range-sigma-f},  implies
$$
\Delta G_f\in\operatorname{Im}(\mathcal P_f).
$$
the same argument gives $\Delta G_g\in\operatorname{Im}(\mathcal P_g)$.

For any
$u\in\operatorname{Im}(\widehat\Sigma_f)\subseteq \operatorname{Im}(\mathcal P_f)$, the variational characterization of the
Moore--Penrose inverse \cite{benisrael2003generalized} implies
$$
\|u\|_{\mathcal P_f^\dagger}^2 =\sup_{\xi\in\mathbb Y} \left\{2\langle u,\xi\rangle-\|\xi\|_{\mathcal P_f}^2\right\}.
$$
Using the fact that $\|\xi\|_{{\cal P}_f}^2\geq\frac{1}{2\beta}\|\xi\|_{\widehat\Sigma_f}^2$, we obtain
\begin{equation}\label{eq:pseudo-order-f}
\|u\|_{{\cal P}_f^\dagger}^2\leq \sup_{\xi\in\mathbb Y}\left\{2\langle u,\xi\rangle-\frac{1}{2\beta}\|\xi\|_{\widehat\Sigma_f}^2\right\} 
=2\beta\|u\|_{\widehat\Sigma_f^\dagger}^2.
\end{equation}

Applying \eqref{eq:pseudo-order-f} to $u=\Delta G_f$ and using \eqref{eq:f-coco-Sigma}, we obtain
\begin{equation}\label{eq:f-coco-Pf}
\left\langle \Delta G_f,\Delta y \right\rangle \geq \frac{1}{2\beta} \|\Delta G_f\|_{{\cal P}_f^\dagger}^2.
\end{equation} 
Similarly,
\begin{equation}\label{eq:g-coco-Pg}
\left\langle \Delta G_g,\Delta z \right\rangle
\geq \frac{1}{2\beta} \|\Delta G_g\|_{{\cal P}_g^\dagger}^2.
\end{equation}

It remains to relate these estimates to the $\mathcal M^\dagger$-metric. Recall that
$$
\mathcal M=
\begin{pmatrix}
{\cal P}_f+\sigma A^*A & 0 & A^*\\
0 & {\cal P}_g & 0\\
A & 0 & \sigma^{-1}I
\end{pmatrix}.
$$
For any $s=(\xi,\eta,\zeta)$, direct calculation gives
\begin{equation}\label{eq:M-quadratic-form}
\|s\|_{\mathcal M}^2 = \|\xi\|_{{\cal P}_f}^2 + \|\eta\|_{{\cal P}_g}^2 + \|\sigma^{1/2}A\xi+\sigma^{-1/2}\zeta\|^2 .
\end{equation}
Let $u\in\operatorname{Im}({\cal P}_f)$ and $v\in\operatorname{Im}({\cal P}_g)$.  Then $(u,v,0)\in\operatorname{Im}(\mathcal M)$. Indeed, choose $\xi$ and $\eta$ such that
$$
{\cal P}_f\xi=u, \,\, {\cal P}_g\eta=v,
$$
and set $\zeta=-\sigma A\xi$. Then
$$
\mathcal M \begin{pmatrix}
\xi\\
\eta\\
\zeta
\end{pmatrix}
=
\begin{pmatrix}
u\\
v\\
0
\end{pmatrix}.
$$
Therefore, by the  characterization of the Moore--Penrose inverse and \eqref{eq:M-quadratic-form},
\begin{align}
\|(u,v,0)\|_{\mathcal M^\dagger}^2
&=\sup_{\xi,\eta,\zeta}\left\{2\langle u,\xi\rangle+2\langle v,\eta\rangle -\|(\xi,\eta,\zeta)\|_{\mathcal M}^2\right\} \notag\\
&=\sup_{\xi,\eta,\zeta}\Big\{2\langle u,\xi\rangle
+2\langle v,\eta\rangle -\|\xi\|_{{\cal P}_f}^2-\|\eta\|_{{\cal P}_g}^2 -\|\sigma^{1/2}A\xi+\sigma^{-1/2}\zeta\|^2\Big\}.
\label{eq:Mdagger-var}
\end{align}
For fixed $\xi$ and $\eta$, the last term in \eqref{eq:Mdagger-var} is maximized by choosing $\zeta=-\sigma A\xi$, which makes the nonnegative square vanish. Thus
\begin{align}
\|(u,v,0)\|_{\mathcal M^\dagger}^2&=  \sup_{\xi,\eta}\left\{2\langle u,\xi\rangle+2\langle v,\eta\rangle -\|\xi\|_{{\cal P}_f}^2-\|\eta\|_{{\cal P}_g}^2\right\} \notag\\
&=\sup_{\xi} \left\{2\langle u,\xi\rangle-\|\xi\|_{{\cal P}_f}^2\right\} +\sup_{\eta}\left\{2\langle v,\eta\rangle-\|\eta\|_{{\cal P}_g}^2\right\} \notag\\
&=\|u\|_{{\cal P}_f^\dagger}^2+\|v\|_{{\cal P}_g^\dagger}^2 .
\label{eq:Mdagger-block}
\end{align}

Finally, for $w_i=(y_i,z_i,x_i),\,\, i=1,2$, define $\Delta w:=w_1-w_2$
and
$$
\Delta\mathcal F_{sm} := \mathcal F_{sm}(w_1)-\mathcal F_{sm}(w_2) =
\begin{pmatrix}
\Delta G_f\\
\Delta G_g\\
0
\end{pmatrix}.
$$
By the range inclusions established above, we have $\Delta G_f\in\operatorname{Im}(\mathcal P_f)$ and $\Delta G_g\in\operatorname{Im}(\mathcal P_g)$. Therefore, the construction preceding \eqref{eq:Mdagger-var} implies that
$\Delta\mathcal F_{sm}\in\operatorname{Im}(\mathcal M)$. Hence, from \eqref{eq:Mdagger-block}, one has
\begin{equation}\label{eq:Fsm-Mdagger-block}
\|\Delta\mathcal F_{sm}\|_{\mathcal M^\dagger}^2 =\|\Delta  G_f\|_{{\cal P}_f^\dagger}^2 +\|\Delta G_g\|_{{\cal P}_g^\dagger}^2.
\end{equation}
Combining \eqref{eq:f-coco-Pf}, \eqref{eq:g-coco-Pg}, and \eqref{eq:Fsm-Mdagger-block}, we obtain
$$
\begin{aligned}
\left\langle\Delta\mathcal F_{sm},\Delta w
\right\rangle
&= \left\langle \Delta G_f,\Delta y\right\rangle
+\left\langle\Delta G_g,\Delta z\right\rangle \\
&\geq \frac{1}{2\beta}\left(\|\Delta G_f\|_{{\cal P}_f^\dagger}^2
+\|\Delta G_g\|_{{\cal P}_g^\dagger}^2\right)
\\
&=\frac{1}{2\beta} \|\Delta\mathcal F_{sm}\|_{\mathcal M^\dagger}^2.
\end{aligned}
$$
This proves both requirements in Assumption~\ref{assume-coco}.
\end{proof}

We now use Assumption~\ref{assume-coco} to establish the key metric property of the fixed-point mapping $\mathbf T$. Although the preconditioning operator $\mathcal M$ may be singular, the cocoercivity of the smooth component in the $\mathcal M^\dagger$-metric allows the perturbation induced by majorization to be controlled in terms of the fixed-point displacement. This yields the following strengthened nonexpansiveness estimate in the $\mathcal M$-seminorm.

\begin{proposition}\label{prop-nonexpansive}
Suppose Assumptions~\ref{assume-wellposed} and \ref{assume-coco} hold with $\beta\in(0,1]$.  Then, for all $w_1,w_2\in\mathbb H$,
\begin{equation}\label{eq:T-averaged}
\|\mathbf T w_1-\mathbf T w_2\|_{\mathcal M}^2 \leq \|w_1-w_2\|_{\mathcal M}^2 -(1-\beta)
\|(I-\mathbf T)w_1-(I-\mathbf T)w_2\|_{\mathcal M}^2.
\end{equation}
In particular, $\mathbf T$ is nonexpansive in the $\mathcal M$-seminorm, including at the endpoint $\beta=1$.
\end{proposition}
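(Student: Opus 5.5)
The plan is the standard forward--backward averagedness argument, carried out in the $\mathcal M$-seminorm. Fix $w_1,w_2\in\mathbb H$ and set $u_i:=\mathbf T w_i$. By the definition \eqref{def-T} and Proposition~\ref{prop-resolvent},
$$
(\mathcal M-\mathcal F_{sm})w_i\in(\mathcal M+\mathcal R_{ns})u_i,
\qquad\text{that is,}\qquad
\mathcal M(w_i-u_i)-\mathcal F_{sm}(w_i)\in\mathcal R_{ns}(u_i).
$$
Since $\mathcal R_{ns}$ is monotone (subdifferentials plus a skew-symmetric linear part), subtracting the two inclusions and pairing with $u_1-u_2$ gives
$$
\langle \mathcal M(\Delta w-\Delta u),\Delta u\rangle\ \ge\ \langle \Delta\mathcal F_{sm},\Delta u\rangle,
$$
with $\Delta w:=w_1-w_2$, $\Delta u:=u_1-u_2$, and $\Delta\mathcal F_{sm}:=\mathcal F_{sm}(w_1)-\mathcal F_{sm}(w_2)$.

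Next set $r:=\Delta w-\Delta u=(I-\mathbf T)w_1-(I-\mathbf T)w_2$. The polarization identity for the semi-inner product $\langle\mathcal M\cdot,\cdot\rangle$ gives
$$
2\langle\mathcal M r,\Delta u\rangle=\|\Delta w\|_{\mathcal M}^2-\|\Delta u\|_{\mathcal M}^2-\|r\|_{\mathcal M}^2 .
$$
Combining this with the monotonicity inequality yields
$$
\|\Delta u\|_{\mathcal M}^2\le \|\Delta w\|_{\mathcal M}^2-\|r\|_{\mathcal M}^2-2\langle\Delta\mathcal F_{sm},\Delta u\rangle .
$$
This uses only $\mathcal M\succeq0$ and needs no invertibility of $\mathcal M$.

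The remaining task is a lower bound for $\langle\Delta\mathcal F_{sm},\Delta u\rangle$. Write $\Delta u=\Delta w-r$. Assumption~\ref{assume-coco} gives
$$
\langle\Delta\mathcal F_{sm},\Delta w\rangle\ge \tfrac1{2\beta}\|\Delta\mathcal F_{sm}\|_{\mathcal M^\dagger}^2 .
$$
For the cross term $\langle\Delta\mathcal F_{sm},r\rangle$, the range condition \eqref{assume-range} is the key ingredient. Because $\Delta\mathcal F_{sm}\in\operatorname{Im}(\mathcal M)$, we can write $\Delta\mathcal F_{sm}=\mathcal M\mathcal M^\dagger\Delta\mathcal F_{sm}$. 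The generalized Cauchy--Schwarz and Young inequalities then give
$$
2\langle\Delta\mathcal F_{sm},r\rangle=2\langle \mathcal M^\dagger\Delta\mathcal F_{sm},\mathcal M r\rangle\le \tfrac1\beta\|\Delta\mathcal F_{sm}\|_{\mathcal M^\dagger}^2+\beta\|r\|_{\mathcal M}^2,
$$
where we used $\|\mathcal M^\dagger v\|_{\mathcal M}^2=\|v\|_{\mathcal M^\dagger}^2$ for $v\in\operatorname{Im}\mathcal M$. Hence
$$
-2\langle\Delta\mathcal F_{sm},\Delta u\rangle\le -\tfrac1\beta\|\Delta\mathcal F_{sm}\|_{\mathcal M^\dagger}^2+\tfrac1\beta\|\Delta\mathcal F_{sm}\|_{\mathcal M^\dagger}^2+\beta\|r\|_{\mathcal M}^2=\beta\|r\|_{\mathcal M}^2 .
$$
Substituting this into the previous bound gives exactly \eqref{eq:T-averaged}. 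Nonexpansiveness follows because $1-\beta\ge0$, and this includes $\beta=1$, where the residual term simply drops out.

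The main obstacle is the cross-term step. Since $\mathcal M$ is singular, we must justify rigorously that $\langle v,r\rangle=\langle \mathcal M^\dagger v,\mathcal M r\rangle$ and the identity $\|\mathcal M^\dagger v\|_{\mathcal M}^2=\|v\|_{\mathcal M^\dagger}^2$ for $v\in\operatorname{Im}(\mathcal M)$. Both follow from $\mathcal M\mathcal M^\dagger$ being the orthogonal projector onto $\operatorname{Im}(\mathcal M)$ together with $\mathcal M^\dagger\mathcal M\mathcal M^\dagger=\mathcal M^\dagger$. Equivalently, one can use the factorization $\mathcal M=\mathcal C\mathcal C^*$ and apply Cauchy--Schwarz in $\mathbb U$. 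A secondary check is the monotonicity of $\mathcal R_{ns}$: its linear part, $(A^*x,B^*x,-Ay-Bz)$, is skew and therefore contributes zero to the pairing.
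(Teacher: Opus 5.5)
Your proposal is correct and matches the paper's proof: you combine the resolvent inclusion, the monotonicity of $\mathcal R_{ns}$ and polarization to get the base inequality, then split $\Delta u=\Delta w-r$ and bound the two pieces with the cocoercivity inequality and the range-enabled weighted generalized Young inequality. You also spell out the pseudoinverse identities ($\mathcal M\mathcal M^\dagger$ as the projector onto $\operatorname{Im}(\mathcal M)$, and $\|\mathcal M^\dagger v\|_{\mathcal M}^2=\|v\|_{\mathcal M^\dagger}^2$) that the paper uses without comment.
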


\begin{proof}
Set $w_i^+:=\mathbf T w_i$, $\Delta w:=w_1-w_2$, $\Delta w^+:=w_1^+-w_2^+$, $e:=\Delta w-\Delta w^+$, and
$\Delta\mathcal F_{sm}:=\mathcal F_{sm}(w_1)-\mathcal F_{sm}(w_2)$.
The definition of $\mathbf T$ gives
$$
\mathcal M(w_i-w_i^+)-\mathcal F_{sm}(w_i)
\in\mathcal R_{ns}(w_i^+),
\qquad i=1,2.
$$
The monotonicity of $\mathcal R_{ns}$ and the polarization identity imply
\begin{equation}\label{eq:nonexp-base}
\|\Delta w^+\|_{\mathcal M}^2
\leq \|\Delta w\|_{\mathcal M}^2-
\|e\|_{\mathcal M}^2
-2\langle\Delta\mathcal F_{sm},\Delta w^+\rangle.
\end{equation}
By Assumption~\ref{assume-coco},
$$
-2\langle\Delta\mathcal F_{sm},\Delta w\rangle
\leq-\frac1\beta
\|\Delta\mathcal F_{sm}\|_{\mathcal M^\dagger}^2.
$$
Moreover, \eqref{assume-range} permits the weighted generalized Young
inequality
$$
2\langle\Delta\mathcal F_{sm},e\rangle
\leq \frac1\beta
\|\Delta\mathcal F_{sm}\|_{\mathcal M^\dagger}^2
+\beta\|e\|_{\mathcal M}^2.
$$
Since
$-2\langle\Delta\mathcal F_{sm},\Delta w^+\rangle
=-2\langle\Delta\mathcal F_{sm},\Delta w\rangle
+2\langle\Delta\mathcal F_{sm},e\rangle$, inserting these two bounds into
\eqref{eq:nonexp-base} proves \eqref{eq:T-averaged}.
\end{proof}

The estimate in Proposition~\ref{prop-nonexpansive} immediately yields a Fej\'er-type descent property when one of the reference points is chosen from $\operatorname{Fix}(\mathbf T)=\Omega^*$. As a consequence, the fixed-point iteration enjoys both monotonicity of the $\mathcal M$-distance to the solution set and asymptotic regularity in the $\mathcal M$-seminorm.

\begin{lemma}\label{lem:fejer}
Suppose Assumptions~\ref{assume-wellposed} and \ref{assume-coco} hold with $\beta\in(0,1)$. Let $\{w^k\}$ be generated by
$w^{k+1}=\mathbf T w^k$. Then, for every $w^*\in\Omega^*$ and all $k\geq0$,
\begin{equation}\label{eq:fejer-descent}
\|w^{k+1}-w^*\|_{\mathcal M}^2 \leq
\|w^k-w^*\|_{\mathcal M}^2
-
(1-\beta)\|w^{k+1}-w^k\|_{\mathcal M}^2.
\end{equation}
Consequently, $\{w^k\}$ is $\mathcal M$-Fej\'er monotone with respect to $\Omega^*$ and $\mathcal M$-asymptotically regular, i.e.,
$$
\lim_{k\to\infty}\|w^{k+1}-w^k\|_{\mathcal M}=0.
$$
\end{lemma}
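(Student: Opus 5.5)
The plan is to specialize Proposition~\ref{prop-nonexpansive} with one argument taken in the fixed-point set. Fix $w^*\in\Omega^*$. By Theorem~\ref{th-fixM}, $\Omega^*=\operatorname{Fix}(\mathbf T)$, so $\mathbf T w^*=w^*$ and $(I-\mathbf T)w^*=0$. Apply \eqref{eq:T-averaged} with $w_1=w^k$ and $w_2=w^*$. Then $\mathbf T w_1-\mathbf T w_2=w^{k+1}-w^*$, and
\[
(I-\mathbf T)w_1-(I-\mathbf T)w_2=w^k-w^{k+1}.
\]
This yields \eqref{eq:fejer-descent} directly, because $\|w^k-w^{k+1}\|_{\mathcal M}=\|w^{k+1}-w^k\|_{\mathcal M}$. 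No further estimate is needed. The only inputs are the single-valuedness of $\mathbf T$ from Proposition~\ref{prop-resolvent}, which makes the iteration well defined, and the identification of fixed points with KKT points.

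For Fej\'er monotonicity, I will drop the nonpositive last term in \eqref{eq:fejer-descent}, which is allowed since $1-\beta>0$. This gives $\|w^{k+1}-w^*\|_{\mathcal M}\le\|w^k-w^*\|_{\mathcal M}$ for every $w^*\in\Omega^*$. That inequality is exactly $\mathcal M$-Fej\'er monotonicity with respect to $\Omega^*$. Here $\|\cdot\|_{\mathcal M}$ is a genuine seminorm because $\mathcal M\succeq0$ by Proposition~\ref{prop:DPPA-Equiv} under Assumption~\ref{assume-wellposed}.

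For asymptotic regularity, I will sum \eqref{eq:fejer-descent} over $k=0,\dots,N$ and telescope:
\[
(1-\beta)\sum_{k=0}^{N}\|w^{k+1}-w^k\|_{\mathcal M}^2\le \|w^0-w^*\|_{\mathcal M}^2-\|w^{N+1}-w^*\|_{\mathcal M}^2\le\|w^0-w^*\|_{\mathcal M}^2 .
\]
Since $\beta<1$, dividing by $1-\beta$ shows the series converges. Its terms therefore tend to zero, so $\|w^{k+1}-w^k\|_{\mathcal M}\to0$.

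I see no real obstacle. The one point needing care is that the strict inequality $\beta<1$ is essential for the summability step; at $\beta=1$ one would only retain nonexpansiveness. A bonus worth recording is that the summation gives the rate $\min_{0\le k\le N}\|w^{k+1}-w^k\|_{\mathcal M}^2\le \|w^0-w^*\|_{\mathcal M}^2/((1-\beta)(N+1))$, although the lemma does not require it.
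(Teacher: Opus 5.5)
Your proposal is correct and follows the paper's proof essentially verbatim: you specialize Proposition~\ref{prop-nonexpansive} at $w_1=w^k$, $w_2=w^*$ using $\mathbf T w^*=w^*$ from Theorem~\ref{th-fixM}, drop the nonpositive term to get Fej\'er monotonicity, and telescope to obtain summability of $\|w^{k+1}-w^k\|_{\mathcal M}^2$ since $\beta<1$. The $\min$-rate you record is a harmless by-product of the same telescoping bound.
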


\begin{proof}
By setting $w_1 = w^k$ and $w_2 = w^* \in \Omega^*$ in Proposition~\ref{prop-nonexpansive}, and noting that $\mathbf{T}w^* = w^*$ from Theorem~\ref{th-fixM}, we immediately obtain
$$
\|w^{k+1} - w^*\|_{\mathcal{M}}^2 \leq 
\|w^k - w^*\|_{\mathcal{M}}^2 - (1 - \beta) \|w^k - w^{k+1}\|_{\mathcal{M}}^2. 
$$
Since $\beta \in (0,1)$, we have $\|w^{k+1} - w^*\|_{\mathcal{M}} \leq \|w^k - w^*\|_{\mathcal{M}}$, which establishes the global $\mathcal{M}$-Fejér monotonicity.  Furthermore, summing the inequality over $k = 0$ to $N$ yields
$$
(1 - \beta) \sum_{k=0}^N \|w^k - w^{k+1}\|_{\mathcal{M}}^2 
\leq \|w^0 - w^*\|_{\mathcal{M}}^2 - \|w^{N+1} - w^*\|_{\mathcal{M}}^2 
\leq \|w^0 - w^*\|_{\mathcal{M}}^2. 
$$
Letting $N \to \infty$, the series converges, which directly implies $\lim_{k \to \infty} \|w^{k+1} - w^k\|_{\mathcal{M}} = 0$, proving the $\mathcal{M}$-asymptotic regularity.
\end{proof}

Lemma~\ref{lem:fejer} gives asymptotic regularity only in the degenerate seminorm.  The range condition and the ambient Lipschitz continuity of the resolvent close the missing kernel directions and yield the following finite dimensional strong convergence result.

\begin{theorem}\label{thm:standard-convergence}
Suppose Assumptions~\ref{assume-wellposed} and \ref{assume-coco} hold with $\beta\in(0,1)$.  Then, for every $w^0\in\mathbb H$, the sequence generated by $w^{k+1}=\mathbf T w^k$ converges in the ambient norm to a point $w^*\in\Omega^*$.
\end{theorem}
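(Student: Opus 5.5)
We use the standard degenerate-preconditioning device: work with the reduced sequence $u^k:=\mathcal C^*w^k$ in $\mathbb U$, where $\mathcal M=\mathcal C\mathcal C^*$ as in the factorization proposition. Then we lift convergence back to $\mathbb H$ through the Lipschitz resolvent of Proposition~\ref{prop-resolvent}.

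\textbf{Step 1: $\mathbf T$ depends on $w$ only through $\mathcal C^*w$.} We have $\mathbf T w=(\mathcal M+\mathcal R_{ns})^{-1}(\mathcal C\mathcal C^*w-\mathcal F_{sm}(w))$. We must show that $\mathcal F_{sm}(w)$ is determined by $\mathcal C^*w$. If $\mathcal C^*w_1=\mathcal C^*w_2$, then $\|w_1-w_2\|_{\mathcal M}=0$. By \eqref{assume-coco-Ieq} applied with $w_1,w_2$ swapped where needed, together with the generalized Cauchy--Schwarz inequality valid by \eqref{assume-range}, we get
$$
\frac{1}{2\beta}\|\Delta\mathcal F_{sm}\|_{\mathcal M^\dagger}^2\le\langle\Delta\mathcal F_{sm},\Delta w\rangle\le\|\Delta\mathcal F_{sm}\|_{\mathcal M^\dagger}\|\Delta w\|_{\mathcal M}=0.
$$
Since $\Delta\mathcal F_{sm}\in\operatorname{Im}(\mathcal M)$ and the $\mathcal M^\dagger$-norm is a genuine norm on $\operatorname{Im}(\mathcal M)$, it follows that $\Delta\mathcal F_{sm}=0$. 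The same Cauchy--Schwarz bound yields the quantitative estimate $\|\Delta\mathcal F_{sm}\|_{\mathcal M^\dagger}\le 2\beta\|\Delta w\|_{\mathcal M}$. Hence $\mathcal F_{sm}(w_1)-\mathcal F_{sm}(w_2)=\mathcal C v$ for some $v$ with $\|v\|\le 2\beta\|\mathcal C^*(w_1-w_2)\|$; this uses finite dimensionality, closed range, and the equivalence of norms on $\operatorname{Im}(\mathcal M)$. Consequently $\mathbf T w=\mathbf R(\mathcal C^*w)$ for a well-defined map $\mathbf R:\operatorname{Im}(\mathcal C^*)\to\mathbb H$. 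Combining the above estimate with the global Lipschitz bound $L_J$ shows that $\mathbf R$ is Lipschitz: $\|\mathbf T w_1-\mathbf T w_2\|\le L\|w_1-w_2\|_{\mathcal M}$.

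\textbf{Step 2: Convergence of $u^k$.} Set $\tilde{\mathbf T}:=\mathcal C^*\mathbf R$ on $\mathbb U$; note that $\mathcal C^*$ is onto by finite dimensionality. Proposition~\ref{prop-nonexpansive} says that $\tilde{\mathbf T}$ is $\beta$-averaged-type nonexpansive in the Euclidean norm of $\mathbb U$, and $u^{k+1}=\tilde{\mathbf T}u^k$. The fixed points satisfy $\operatorname{Fix}\tilde{\mathbf T}\supseteq\mathcal C^*\Omega^*\neq\emptyset$. By Lemma~\ref{lem:fejer}, $\{u^k\}$ is Fej\'er monotone with respect to $\mathcal C^*\Omega^*$, hence bounded, and asymptotically regular. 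In finite dimensions, take a cluster point $\bar u$ of a subsequence $u^{k_j}\to\bar u$. Continuity of $\tilde{\mathbf T}$ together with $u^{k+1}-u^k\to0$ gives $\tilde{\mathbf T}\bar u=\bar u$. Define $w^*:=\mathbf R(\bar u)$. Then $\mathcal C^*w^*=\tilde{\mathbf T}\bar u=\bar u$, so $\mathbf T w^*=\mathbf R(\mathcal C^*w^*)=\mathbf R(\bar u)=w^*$, and by Theorem~\ref{th-fixM} we get $w^*\in\Omega^*$. Fej\'er monotonicity of $\|u^k-\bar u\|=\|w^k-w^*\|_{\mathcal M}$ then forces the whole sequence $u^k$ to converge to $\bar u$ (Opial-type argument in finite dimensions).

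\textbf{Step 3: Lift back to $\mathbb H$.} For $k\ge0$ we have $w^{k+1}=\mathbf R(u^k)$. By Step 1,
$$
\|w^{k+1}-w^*\|=\|\mathbf R(u^k)-\mathbf R(\bar u)\|\le L\|u^k-\bar u\|\to0.
$$
Hence $w^k\to w^*\in\Omega^*$ in the ambient norm.

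\textbf{Main obstacle.} The delicate point is Step 1. We must ensure that the explicit forward term $\mathcal F_{sm}$ does not see the kernel directions of $\mathcal M$. This is exactly where the range condition \eqref{assume-range} and the cocoercivity inequality must be combined to control $\mathcal F_{sm}$ differences by the $\mathcal M$-seminorm. Once that Lipschitz-in-seminorm bound holds, the remaining steps are routine Fej\'er arguments.
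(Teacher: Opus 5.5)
Your proof is correct, but it is organized differently from the paper's. The paper stays in $\mathbb H$ throughout. It derives the same ambient bound $\|\mathbf T w_1-\mathbf T w_2\|\le C_T\|w_1-w_2\|_{\mathcal M}$ that you obtain in Step~1, and uses it to get ambient boundedness and ambient asymptotic regularity of $\{w^k\}$. Continuity then places every cluster point in $\Omega^*$. The paper proves uniqueness of the cluster point in a separate step: two cluster points differ by an element of $\ker\mathcal M$, so the range condition and cocoercivity force equal $\mathcal F_{sm}$-values, so both feed the same input to the single-valued resolvent. You use this kernel-invariance fact globally and up front instead. You factor $\mathbf T=\mathbf R\circ\mathcal C^*$, run the standard Fej\'er/cluster-point argument for the genuinely nonexpansive map $\tilde{\mathbf T}=\mathcal C^*\mathbf R$ on the Hilbert space $\mathbb U$, and lift back through the Lipschitz map $\mathbf R$. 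This is the reduced-iteration viewpoint of Bredies et al. It identifies the limit directly as $w^*=\mathbf R(\bar u)$ and gives the transfer inequality $\|w^{k+1}-w^*\|\le L\|w^k-w^*\|_{\mathcal M}$, so no separate uniqueness argument is needed. Indeed, the paper's own estimate \eqref{eq:T-ambient-control} with $w^*=\bar w$ would already finish once $\|w^k-\bar w\|_{\mathcal M}\to0$. Your route also makes any result for nonexpansive maps on a Hilbert space immediately available for $\{u^k\}$; the paper's route avoids introducing $\mathcal C$ and $\mathbb U$. One small correction: the bound $\|v\|\le 2\beta\|\mathcal C^*(w_1-w_2)\|$ in Step~1 does not come from equivalence of norms. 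It follows from the exact identity $\|\mathcal C v\|_{\mathcal M^\dagger}=\|v\|$, valid because $\mathcal C$ is injective, though for your argument any constant would suffice.
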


\begin{proof}
Let $J:=(\mathcal M+\mathcal R_{ns})^{-1}$, and let $L_J$ be a global Lipschitz constant supplied by Proposition~\ref{prop-resolvent}.  For any $w_1,w_2\in\mathbb H$, Assumption~\ref{assume-coco}, the generalized Cauchy--Schwarz inequality, and \eqref{assume-range} give
\begin{equation}\label{eq:F-control-by-M}
\|\mathcal F_{sm}(w_1)-\mathcal F_{sm}(w_2)\|_{\mathcal M^\dagger} \leq 2\beta\|w_1-w_2\|_{\mathcal M}.
\end{equation}
Indeed, this is immediate when the left-hand side vanishes; otherwise one divides the cocoercivity inequality by that quantity.  For vectors in $\operatorname{Im}(\mathcal M)$, spectral calculus also gives
$$
\|v\|\leq\|\mathcal M\|^{1/2}\|v\|_{\mathcal M^\dagger},
$$
whereas $\|\mathcal M d\|\leq\|\mathcal M\|^{1/2}\|d\|_{\mathcal M}$.
Consequently,
\begin{equation}\label{eq:T-ambient-control}
\|\mathbf T w_1-\mathbf T w_2\| \leq C_T\|w_1-w_2\|_{\mathcal M}\,\,\,\text{and}\,\,\,
C_T:=L_J(1+2\beta)\|\mathcal M\|^{1/2}.
\end{equation}

Fix $w^*\in\Omega^*$, which exists by Assumption~\ref{assume-wellposed}. Lemma~\ref{lem:fejer} and \eqref{eq:T-ambient-control} yield, for every $k\geq0$,
$$
\|w^{k+1}-w^*\| =\|\mathbf T w^k-\mathbf T w^*\| \leq C_T\|w^k-w^*\|_{\mathcal M} \leq C_T\|w^0-w^*\|_{\mathcal M}.
$$
Thus $\{w^k\}$ is bounded in the ambient norm.  Moreover, for $k\geq1$,
$$
\|w^{k+1}-w^k\| \leq C_T\|w^k-w^{k-1}\|_{\mathcal M}\longrightarrow0
$$
by Lemma~\ref{lem:fejer}.  Since $\mathbf T$ is continuous, every cluster point $\bar w$ satisfies $\bar w=\mathbf T\bar w$ and hence belongs to $\Omega^*$ by Theorem~\ref{th-fixM}.

It remains to prove uniqueness of the cluster point.  Let $\bar w$ and $\widetilde w$ be two cluster points.  Fej\'er monotonicity with respect to $\bar w$, together with a subsequence converging to $\bar w$, shows that $\|w^k-\bar w\|_{\mathcal M}\to0$.  Passing to a subsequence converging to $\widetilde w$ therefore gives $\mathcal M(\bar w-\widetilde w)=0$.  By \eqref{assume-range}, there is $v\in\mathbb H$ such that
$$
\mathcal F_{sm}(\bar w)-\mathcal F_{sm}(\widetilde w)=\mathcal Mv.
$$
Hence the inner product of this difference with $\bar w-\widetilde w$ is zero.  The cocoercivity inequality then implies
$\mathcal F_{sm}(\bar w)=\mathcal F_{sm}(\widetilde w)$.  The two fixed-point identities now have the same input to the single-valued map $J$:
$$
(\mathcal M-\mathcal F_{sm})\bar w =(\mathcal M-\mathcal F_{sm})\widetilde w,
$$
so $\bar w=\widetilde w$.  A bounded finite-dimensional sequence with a unique cluster point converges to that point, completing the proof.
\end{proof}

\begin{remark}
The strict condition $\beta<1$ is used only for the asymptotic regularity of the standard Picard iteration.  The accelerated Halpern scheme below needs only nonexpansiveness and therefore permits the endpoint $\beta=1$.
\end{remark}

\section{Halpern Acceleration of the Majorized ADMM}\label{sec:HMADMM}

The operator-theoretic reformulation developed in Section~\ref{sec2:operator-theoretic} and the nonexpansiveness property established in Proposition~\ref{prop-nonexpansive} provide a natural basis for applying Halpern-type acceleration to the majorized ADMM. In this section, we incorporate an anchoring step into the fixed-point iteration associated with
$\mathbf T$ and derive a Halpern-accelerated majorized ADMM.
We establish the optimal $\mathcal O(1/k)$ worst-case rate for the fixed-point residual and then transfer this estimate to an $\mathcal O(1/k)$ KKT residual upper bound at the intermediate iterates. The resulting algorithm is presented in
Algorithm~\ref{alg:halpern}.
\begin{algorithm}[ht]
\caption{Halpern-Accelerated Majorized ADMM}
\label{alg:halpern}
\begin{algorithmic}[1]
\REQUIRE Penalty parameter $\sigma>0$, proximal operators $\mathcal S$ and $\mathcal T$ satisfying \eqref{eq:wellposed-curvature}, a constant $\beta\in(0,1]$ satisfying Assumption~\ref{assume-coco}, and $0<\rho\leq2-\beta$. Choose initial anchor point \(w^0 = (y^0, z^0, x^0)\).  Define Halpern parameter \(\lambda_k = \frac{1}{k+2}\). %

\FOR{$k = 0, 1, 2, \dots$}
    \STATE \textbf{Step 1 (Intermediate \(z\)-update):}
    $$
    \begin{aligned}
    \tilde{z}^{k+1}
    = \arg\min_{z} \Bigl\{&q(z)+\hat{g}(z,z^k)
    +\langle B^*x^k,z\rangle\\
    &+\frac{\sigma}{2}\|Ay^k+Bz-c\|^2
    +\frac{1}{2}\|z-z^k\|_{\mathcal T}^2\Bigr\}.
    \end{aligned}
    $$
    
    \STATE \textbf{Step 2 (Intermediate dual \(x\)-update):}
    $$
    \tilde{x}^{k+1} = x^k + \sigma (Ay^k + B\tilde{z}^{k+1} - c).
    $$
    
    \STATE \textbf{Step 3 (Intermediate \(y\)-update):}
    $$
    \begin{aligned}
    \tilde{y}^{k+1}
    = \arg\min_{y} \Bigl\{&p(y)+\hat{f}(y,y^k)
    +\langle A^*\tilde{x}^{k+1},y\rangle\\
    &+\frac{\sigma}{2}\|Ay+B\tilde{z}^{k+1}-c\|^2
    +\frac{1}{2}\|y-y^k\|_{\mathcal S}^2\Bigr\}.
    \end{aligned}
    $$
    
    \STATE \textbf{Step 4 Relaxation step:} 
    $$\widehat w^{k+1} = (1-\rho)w^k+\rho\widetilde w^{k+1}.$$
    
    \STATE \textbf{Step 5 Halpern anchoring step:} 
    $$ w^{k+1} = \lambda_k w^0 + (1-\lambda_k)\widehat w^{k+1}. $$ 
\ENDFOR
\end{algorithmic}
\end{algorithm}

The intermediate updates in Algorithm~\ref{alg:halpern} coincide with one application of the fixed-point mapping $\mathbf T$ defined in \eqref{def-T}. More precisely, by the operator-theoretic reformulation established in Section~2,
$$
\widetilde w^{k+1}=\mathbf T(w^k).
$$
Consequently, the relaxation step can be written as
$$
\widehat w^{k+1}=(1-\rho)w^k+\rho\mathbf T(w^k)=\widehat{\mathbf T}(w^k),
$$
where the relaxed mapping $\widehat{\mathbf T}:\mathbb H\to\mathbb H$ is defined by
\begin{equation}\label{def-hat-T}
\widehat{\mathbf T} := (1-\rho)I+\rho\mathbf T.
\end{equation}
It follows that Algorithm~\ref{alg:halpern} admits the compact fixed-point representation
\begin{equation}\label{def-acc-PPP}
w^{k+1}=\lambda_k w^0+(1-\lambda_k)\widehat{\mathbf T}(w^k),\qquad k=0,1,\ldots.
\end{equation}

For any $\rho>0$, the relaxed mapping $\widehat{\mathbf T}$ has the same fixed-point set as $\mathbf T$. Indeed,
$$
w=\widehat{\mathbf T}(w) \quad\Longleftrightarrow\quad w=(1-\rho)w+\rho\mathbf T(w)
\quad\Longleftrightarrow\quad w=\mathbf T(w).
$$
Hence, by Theorem~\ref{th-fixM}, $\operatorname{Fix}(\widehat{\mathbf T})=\operatorname{Fix}(\mathbf T)=\Omega^*$. Thus, Algorithm~\ref{alg:halpern} is precisely a Halpern-type iteration applied to the relaxed fixed-point mapping $\widehat{\mathbf T}$, with anchor point $w^0$. The convergence-rate analysis of \eqref{def-acc-PPP} requires the relaxed mapping $\widehat{\mathbf T}$ to preserve the nonexpansiveness property of the underlying fixed-point mapping. The strengthened estimate in Proposition~\ref{prop-nonexpansive} yields an explicit admissible range for the relaxation parameter $\rho$.

\begin{proposition}\label{prop-hatT}
Suppose Assumptions~\ref{assume-wellposed} and \ref{assume-coco} hold with $\beta\in(0,1]$. If $0<\rho\leq2-\beta$, then
$\widehat{\mathbf T}$ is nonexpansive with respect to the $\mathcal M$-seminorm. More precisely, for any $w_1,w_2\in\mathbb H$,
\begin{equation}\label{eq:hatT-nonexpansive}
\|\widehat{\mathbf T}w_1-\widehat{\mathbf T}w_2\|_{\mathcal M}^2\leq\|w_1-w_2\|_{\mathcal M}^2
-\frac{2-\beta-\rho}{\rho}\|
(I-\widehat{\mathbf T})w_1-(I-\widehat{\mathbf T})w_2\|_{\mathcal M}^2.
\end{equation}
\end{proposition}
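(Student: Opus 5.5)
The plan is to derive \eqref{eq:hatT-nonexpansive} directly from the averaged-type estimate \eqref{eq:T-averaged} of Proposition~\ref{prop-nonexpansive}, using the standard algebraic identity for relaxations in a (semi)inner-product space. Fix $w_1,w_2$ and set $\Delta w:=w_1-w_2$, $\Delta\mathbf T:=\mathbf T w_1-\mathbf T w_2$, and $e:=\Delta w-\Delta\mathbf T=(I-\mathbf T)w_1-(I-\mathbf T)w_2$. Since $\widehat{\mathbf T}=(1-\rho)I+\rho\mathbf T$, we have $\widehat{\mathbf T}w_1-\widehat{\mathbf T}w_2=\Delta w-\rho e$ and $(I-\widehat{\mathbf T})w_1-(I-\widehat{\mathbf T})w_2=\rho e$. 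Everything will be expressed through $\|\Delta w\|_{\mathcal M}^2$, $\|e\|_{\mathcal M}^2$, and $\langle \Delta w,e\rangle_{\mathcal M}$, where $\langle u,v\rangle_{\mathcal M}:=\langle u,\mathcal Mv\rangle$ is a symmetric positive semidefinite bilinear form by Proposition~\ref{prop:DPPA-Equiv}. Polarization identities therefore hold even though $\mathcal M$ is singular.

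First, rewrite \eqref{eq:T-averaged} in inner-product form. Expanding $\|\Delta\mathbf T\|_{\mathcal M}^2=\|\Delta w-e\|_{\mathcal M}^2=\|\Delta w\|_{\mathcal M}^2-2\langle\Delta w,e\rangle_{\mathcal M}+\|e\|_{\mathcal M}^2$ and inserting this into \eqref{eq:T-averaged} gives
\[
2\langle \Delta w,e\rangle_{\mathcal M}\ \ge\ (2-\beta)\|e\|_{\mathcal M}^2 .
\]
Second, expand the relaxed quantity:
\[
\|\Delta w-\rho e\|_{\mathcal M}^2=\|\Delta w\|_{\mathcal M}^2-2\rho\langle\Delta w,e\rangle_{\mathcal M}+\rho^2\|e\|_{\mathcal M}^2\le\|\Delta w\|_{\mathcal M}^2-\rho(2-\beta-\rho)\|e\|_{\mathcal M}^2 ,
\]
where the inequality uses the first step together with $\rho>0$. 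Finally, substitute $\|e\|_{\mathcal M}^2=\rho^{-2}\|(I-\widehat{\mathbf T})w_1-(I-\widehat{\mathbf T})w_2\|_{\mathcal M}^2$, so that $\rho(2-\beta-\rho)\|e\|_{\mathcal M}^2=\frac{2-\beta-\rho}{\rho}\|(I-\widehat{\mathbf T})w_1-(I-\widehat{\mathbf T})w_2\|_{\mathcal M}^2$. This is exactly \eqref{eq:hatT-nonexpansive}. Nonexpansiveness then follows because $2-\beta-\rho\ge0$ when $\rho\le 2-\beta$.

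There is no real obstacle here; the argument is purely algebraic. The only point requiring care is to work throughout with the semi-inner product $\langle\cdot,\mathcal M\,\cdot\rangle$ and never with $\mathcal M^{-1}$ or $\mathcal M^\dagger$, so that degeneracy of $\mathcal M$ plays no role. The endpoint $\beta=1$ is also allowed, since Proposition~\ref{prop-nonexpansive} holds for $\beta\in(0,1]$.
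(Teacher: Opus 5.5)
Your proposal is correct and takes essentially the paper's route: both work in the semi-inner product $\langle\cdot,\mathcal M\,\cdot\rangle$, expand $\|\widehat{\mathbf T}w_1-\widehat{\mathbf T}w_2\|_{\mathcal M}^2$, and insert \eqref{eq:T-averaged} from Proposition~\ref{prop-nonexpansive} to obtain the coefficient $-\rho(2-\beta-\rho)$ on $\|e\|_{\mathcal M}^2$. The only difference is cosmetic: the paper uses the convex-combination identity $\|(1-\rho)\Delta w+\rho\Delta\mathbf T\|_{\mathcal M}^2=(1-\rho)\|\Delta w\|_{\mathcal M}^2+\rho\|\Delta\mathbf T\|_{\mathcal M}^2-\rho(1-\rho)\|\Delta w-\Delta\mathbf T\|_{\mathcal M}^2$, whereas you first rewrite \eqref{eq:T-averaged} as $2\langle\Delta w,e\rangle_{\mathcal M}\geq(2-\beta)\|e\|_{\mathcal M}^2$.
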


\begin{proof}
Let \(\Delta w := w_1 - w_2\), \(\Delta \mathbf{T} := \mathbf{T}w_1 - \mathbf{T}w_2\). Then
$$
\widehat{\mathbf{T}} w_1 - \widehat{\mathbf{T}} w_2 = (1-\rho)\Delta w + \rho\Delta \mathbf{T},
$$
This, together with $2\langle \Delta w, \Delta \mathbf{T} \rangle_{\mathcal{M}} = \|\Delta w\|_{\mathcal{M}}^2 + \|\Delta \mathbf{T}\|_{\mathcal{M}}^2 - \|\Delta w - \Delta \mathbf{T}\|_{\mathcal{M}}^2$, implies that
$$
\begin{aligned}
\|\widehat{\mathbf{T}} w_1 - \widehat{\mathbf{T}} w_2\|_{\mathcal{M}}^2 & = (1-\rho)\|\Delta w\|_{\mathcal{M}}^2 + \rho\|\Delta \mathbf{T}\|_{\mathcal{M}}^2
- \rho(1-\rho)\|\Delta w - \Delta \mathbf{T}\|_{\mathcal{M}}^2\\
\end{aligned}
$$
Furthermore, by Proposition \ref{prop-nonexpansive} and the fact
$(I-\widehat{\mathbf{T}})w_1 - (I-\widehat{\mathbf{T}})w_2
=\rho (\Delta w-\Delta \mathbf{T})$, we have that
$$
\begin{aligned}
\|\widehat{\mathbf{T}} w_1 - \widehat{\mathbf{T}}w_2\|_{\mathcal{M}}^2
&\leq \|\Delta w\|_{\mathcal{M}}^2
- \rho(2-\rho-\beta)\|\Delta w - \Delta \mathbf{T}\|_{\mathcal{M}}^2 \\
&= \|\Delta w\|_{\mathcal{M}}^2
- \frac{2-\rho-\beta}{\rho}\|(I-\widehat{\mathbf{T}})w_1 - (I-\widehat{\mathbf{T}})w_2\|_{\mathcal{M}}^2.
\end{aligned}
$$
The proof is completed.
\end{proof}

Proposition~\ref{prop-hatT} shows that, for any $0<\rho\le 2-\beta$, the relaxed mapping $\widehat{\mathbf T}$ preserves the nonexpansiveness property required by the Halpern iteration. We now specialize the anchoring parameter to
$$
\lambda_k=\frac{1}{k+2},\qquad k=0,1,\ldots,
$$
and derive the corresponding nonergodic fixed-point residual bound.

\begin{theorem}\label{thm:halpern_rate}
Suppose Assumptions~\ref{assume-wellposed} and \ref{assume-coco} hold with
$\beta\in(0,1]$, and let $0<\rho\leq2-\beta$. If $\{w^k\}$ is generated by
Algorithm~\ref{alg:halpern}, then, for every $w^*\in\Omega^*$ and $k\geq0$,
\begin{equation}\label{eq:halpern-rate}
\|w^k-\widehat{\mathbf T}w^k\|_{\mathcal M}
\leq
\frac{2\|w^0-w^*\|_{\mathcal M}}{k+1}.
\end{equation}
Equivalently,
\begin{equation}\label{eq:halpern-rate-T}
\|w^k-\mathbf T w^k\|_{\mathcal M}
\leq
\frac{2\|w^0-w^*\|_{\mathcal M}}{\rho(k+1)}.
\end{equation}
\end{theorem}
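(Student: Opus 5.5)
Since $\widehat{\mathbf T}$ is nonexpansive in the $\mathcal M$-seminorm (Proposition~\ref{prop-hatT}) and $\operatorname{Fix}(\widehat{\mathbf T})=\Omega^*$, I will run Lieder's Halpern argument in the seminorm $\|\cdot\|_{\mathcal M}$. That argument uses only the semi-inner product $\langle u,v\rangle_{\mathcal M}:=\langle u,\mathcal M v\rangle$, polarization identities, and the single inequality
\[
\|\widehat{\mathbf T}u-\widehat{\mathbf T}v\|_{\mathcal M}\le\|u-v\|_{\mathcal M},
\]
so degeneracy of $\mathcal M$ causes no difficulty. Fix $w^*\in\Omega^*$ and set $T:=\widehat{\mathbf T}$ and $\lambda_k=1/(k+2)$. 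The iteration \eqref{def-acc-PPP} reads $w^{k+1}=\frac{1}{k+2}w^0+\frac{k+1}{k+2}Tw^k$.

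\textbf{Step 1 (algebraic setup).} Let $d^k:=w^k-Tw^k$. From the recursion,
\[
w^{k+1}-w^k=\lambda_k(w^0-w^k)+(1-\lambda_k)(Tw^k-w^k).
\]
I will collect the nonexpansiveness inequalities
\[
\|Tw^{k}-Tw^{k-1}\|_{\mathcal M}^2\le\|w^k-w^{k-1}\|_{\mathcal M}^2,\qquad k=1,\dots,K,
\]
together with
\[
\|Tw^k-w^*\|_{\mathcal M}^2\le\|w^k-w^*\|_{\mathcal M}^2,\qquad k=0,\dots,K,
\]
where the latter uses $Tw^*=w^*$.

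\textbf{Step 2 (Lieder's certificate).} Lieder's proof rests on one identity: with explicit nonnegative multipliers (of order $k(k+1)$ on the consecutive-pair inequalities and a suitable weight on the $w^*$ inequality), the weighted sum $\sum(\text{rhs}-\text{lhs})\ge0$ equals
\[
\tfrac{(K+1)^2}{4}\|d^K\|_{\mathcal M}^2-\|w^0-w^*\|_{\mathcal M}^2+\|\text{some vector}\|_{\mathcal M}^2.
\]
This gives $(K+1)\|d^K\|_{\mathcal M}\le2\|w^0-w^*\|_{\mathcal M}$. Since the identity is a pure polynomial identity in inner products of the vectors $w^0,w^k,Tw^k,w^*$, it holds verbatim for the positive semidefinite bilinear form $\langle\cdot,\mathcal M\cdot\rangle$. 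Dropping the last square requires only $\|\cdot\|_{\mathcal M}^2\ge0$, which is exactly $\mathcal M\succeq0$. Thus I would invoke \cite{lieder2021convergence} with the remark that the inner product may be replaced by a semi-inner product. Alternatively, I would reproduce the standard Lyapunov potential
\[
V_k:=\tfrac{(k+1)^2}{2}\|d^k\|_{\mathcal M}^2+(k+1)\langle d^k,w^k-w^0\rangle_{\mathcal M}
\]
and show $V_{k+1}\le V_k$ via the consecutive-pair nonexpansiveness. Combined with the bound $V_K\ge\frac{(K+1)^2}{2}\|d^K\|_{\mathcal M}^2-\dots$ obtained from the $w^*$ inequality and Young's inequality, this yields the claim.

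The \textbf{main obstacle} is getting the certificate/potential algebra right, i.e.\ checking that the monotone decrease of $V_k$ and the lower bound of $V_K$ by $\|d^K\|^2$ and $\|w^0-w^*\|^2$ use only seminorm manipulations. I would verify carefully that no step divides by a norm or uses definiteness: Cauchy--Schwarz and Young hold for semi-inner products. Wherever a division by a norm seems to appear, I would replace the argument by the squared form to avoid dividing by a possibly zero seminorm.

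\textbf{Step 3.} The equivalent form \eqref{eq:halpern-rate-T} follows immediately from $w-\widehat{\mathbf T}w=\rho(w-\mathbf Tw)$, which gives $\|w^k-\mathbf Tw^k\|_{\mathcal M}=\rho^{-1}\|w^k-\widehat{\mathbf T}w^k\|_{\mathcal M}$. The case $k=0$ also follows directly: by nonexpansiveness,
\[
\|w^0-Tw^0\|_{\mathcal M}\le\|w^0-w^*\|_{\mathcal M}+\|Tw^0-w^*\|_{\mathcal M}\le2\|w^0-w^*\|_{\mathcal M}.
\]
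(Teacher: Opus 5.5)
Your main route is correct in substance but differs from the paper's. The paper does not reopen Lieder's proof. It passes to the quotient $\mathbb H/\ker\mathcal M$ with inner product $\langle w,\mathcal M v\rangle$, which is a genuine finite-dimensional Hilbert space. Nonexpansiveness of $\widehat{\mathbf T}$ in the seminorm shows that $w_1-w_2\in\ker\mathcal M$ forces $\widehat{\mathbf T}w_1-\widehat{\mathbf T}w_2\in\ker\mathcal M$. Hence $\widehat{\mathbf T}$ descends to a nonexpansive map $\mathfrak T$ with fixed point $[w^*]$, the classes $[w^k]$ obey the classical Halpern recursion, and Lieder's theorem is cited verbatim. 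You instead argue that Lieder's argument is a sum-of-squares identity in the Gram data of $w^0,w^k,Tw^k,w^*$ (your $T=\widehat{\mathbf T}$), and is therefore valid for any positive semidefinite bilinear form. The quotient trick lets the paper cite Lieder as a black box without auditing the proof's internals. Your route avoids the quotient construction and, once the certificate is written out, is self-contained. Your Step 3 coincides with the paper's final line.

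Two pieces of your explicit algebra are wrong as stated, though both are easy to repair. First, the sign. With every inequality written as $\text{rhs}-\text{lhs}\ge0$, the correct identity is
\begin{align*}
&\sum_{k=1}^{K}\tfrac{k(k+1)}{2}\bigl(\|w^k-w^{k-1}\|_{\mathcal M}^2-\|Tw^k-Tw^{k-1}\|_{\mathcal M}^2\bigr)\\
&\quad+\tfrac{K+1}{2}\bigl(\|w^K-w^*\|_{\mathcal M}^2-\|Tw^K-w^*\|_{\mathcal M}^2\bigr)\\
&\quad=\|w^0-w^*\|_{\mathcal M}^2-\tfrac{(K+1)^2}{4}\|d^K\|_{\mathcal M}^2-\bigl\|\tfrac{K+1}{2}d^K-(w^0-w^*)\bigr\|_{\mathcal M}^2 .
\end{align*}
Your right-hand side has the opposite sign, and read literally it would give only a lower bound on $\|d^K\|_{\mathcal M}$.

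Second, your potential with leading coefficient $\frac{(k+1)^2}{2}$ is not monotone under nonexpansiveness alone. Take $\mathcal M=I$ on $\mathbb R$, $T=-I$, $w^*=0$, $w^0=1$. The recursion gives $w^1=0$ and $w^2=\frac13$, hence $V_1=0<V_2=\frac23$. The correct potential is
\[
V_k=\tfrac{k(k+1)}{2}\|d^k\|_{\mathcal M}^2+(k+1)\langle d^k,w^k-w^0\rangle_{\mathcal M}.
\]
For this choice one checks exactly that
\[
V_k-V_{k+1}=(k+1)(k+2)\Bigl(\langle d^{k+1}-d^k,w^{k+1}-w^k\rangle_{\mathcal M}-\tfrac12\|d^{k+1}-d^k\|_{\mathcal M}^2\Bigr)\ge0,\qquad V_0=0 .
\]
Combine $V_K\le0$ with the fixed-point inequality $\langle d^K,w^K-w^*\rangle_{\mathcal M}\ge\frac12\|d^K\|_{\mathcal M}^2$ and with $\|\frac{K+1}{2}d^K-(w^0-w^*)\|_{\mathcal M}^2\ge0$. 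This yields $(K+1)^2\|d^K\|_{\mathcal M}^2\le4\|w^0-w^*\|_{\mathcal M}^2$ without dividing by a possibly vanishing seminorm. Your lower bound $V_K\ge\frac{(K+1)^2}{2}\|d^K\|_{\mathcal M}^2-\dots$ is in fact the one this corrected potential produces.
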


\begin{proof}
Let $\mathcal N:=\ker(\mathcal M)$ and equip the quotient space
$\widehat{\mathbb H}:=\mathbb H/\mathcal N$ with
$$
\langle[w],[v]\rangle_{\widehat{\mathbb H}}
:=\langle w,\mathcal Mv\rangle,
\qquad
\|[w]\|_{\widehat{\mathbb H}}=\|w\|_{\mathcal M}.
$$
This is a finite-dimensional Hilbert space.  Proposition~\ref{prop-hatT}
shows that $w_1-w_2\in\mathcal N$ implies
$\widehat{\mathbf T}w_1-\widehat{\mathbf T}w_2\in\mathcal N$.
Consequently,
$$
\mathfrak T[w]:=[\widehat{\mathbf T}w]
$$
is a well-defined nonexpansive mapping on $\widehat{\mathbb H}$, and
$[w^*]\in\operatorname{Fix}(\mathfrak T)$ for every $w^*\in\Omega^*$.
The quotient classes of Algorithm~\ref{alg:halpern} satisfy the classical
Halpern iteration
$$
[w^{k+1}]=\frac{1}{k+2}[w^0]
+\frac{k+1}{k+2}\mathfrak T[w^k].
$$
The sharp Halpern residual estimate of Lieder
\cite{lieder2021convergence}, applied in $\widehat{\mathbb H}$, gives
$$
\|[w^k]-\mathfrak T[w^k]\|_{\widehat{\mathbb H}}
\leq\frac{2\|[w^0]-[w^*]\|_{\widehat{\mathbb H}}}{k+1},
$$
which is exactly \eqref{eq:halpern-rate}.  Finally,
$I-\widehat{\mathbf T}=\rho(I-\mathbf T)$ gives
\eqref{eq:halpern-rate-T}.
\end{proof}

The optimality claim in Theorem~\ref{thm:halpern_rate} refers to the sharp
worst-case fixed-point residual bound over general nonexpansive mappings.  It
does not assert a matching lower bound for the structured KKT residual below.

\begin{remark}\label{rem:parameter-comparison}
The admissible range of the relaxation parameter $\rho$ reflects the interaction between the majorization-induced perturbation and the degenerate preconditioning structure. Indeed, Proposition~\ref{prop-hatT} shows that
$$
0<\rho\le 2-\beta,
$$
where $\beta\in(0,1]$ is the constant appearing in Assumption~\ref{assume-coco}. Thus, the allowable relaxation range depends explicitly on the cocoercivity of the smooth component relative to the $\mathcal M^\dagger$-metric. In particular, when the combined majorization--proximal terms provide only the curvature required to satisfy Assumption~\ref{assume-coco}, the corresponding value of $\beta$ may be close to one, and the admissible upper bound $2-\beta$ approaches one. By contrast, in the absence of the smooth perturbation, i.e., when $\mathcal F_{sm}\equiv0$, the strengthened estimate in Proposition~\ref{prop-nonexpansive} reduces to the firmly nonexpansive-type bound
$$
\|\mathbf T w_1-\mathbf T w_2\|_{\mathcal M}^2
\le
\|w_1-w_2\|_{\mathcal M}^2
-
\|(I-\mathbf T)w_1-(I-\mathbf T)w_2\|_{\mathcal M}^2,
$$
and the relaxed mapping remains nonexpansive for the full range
$$
0<\rho\le2.
$$
This recovers the parameter regime underlying the reflected choices used in
\cite{lu2024restarted,sun2025accelerating}.

The above distinction is also consistent with the parameter behavior reported for majorized ADMM with indefinite proximal terms. In particular, the studies
\cite{li2016majorized,zhang2020linearly} allow the proximal operators themselves to be indefinite, provided that the combined majorization--proximal structure satisfies suitable curvature conditions. This is closely related to the role of
$\mathcal S+\widehat{\Sigma}_f$ and
$\mathcal T+\widehat{\Sigma}_g$
in Assumption~\ref{assume-coco}.
\end{remark}

The preceding theorem establishes an $\mathcal O(1/k)$ rate for the fixed-point residual in the metric induced by $\mathcal M$. Assumption~\ref{assume-coco} now includes the range compatibility needed to convert this estimate into a bound on the KKT residual in the norm of the ambient space.

\begin{corollary}\label{cor:kkt-rate}
Suppose Assumptions~\ref{assume-wellposed} and \ref{assume-coco} hold with $\beta\in(0,1]$, and let $0<\rho\leq2-\beta$.  Let $\{w^k\}$ be generated by Algorithm~\ref{alg:halpern}, and define the intermediate iterate
$u^k:=\mathbf T(w^k)=\widetilde w^{k+1}$. Then there exists $r^k\in\mathcal R(u^k)$ such that \begin{equation}\label{eq:kkt-residual-bound-general}
\|r^k\| \leq (1+2\beta)\|\mathcal M\|^{1/2} \|w^k-\mathbf T(w^k)\|_{\mathcal M}.
\end{equation}
Moreover, for every $w^*\in\Omega^*$,
\begin{equation}\label{eq:kkt-rate-general}
\operatorname{dist}(0,\mathcal R(u^k)) \leq \frac{2(1+2\beta)\|\mathcal M\|^{1/2}} {\rho(k+1)} \|w^0-w^*\|_{\mathcal M}.
\end{equation}
\end{corollary}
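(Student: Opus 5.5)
The natural starting point is the defining inclusion of $\mathbf T$. Since $u^k=\mathbf T(w^k)$, we have $(\mathcal M-\mathcal F_{sm})w^k\in(\mathcal M+\mathcal R_{ns})u^k$, that is,
\[
\mathcal M(w^k-u^k)-\mathcal F_{sm}(w^k)\in\mathcal R_{ns}(u^k).
\]
Adding $\mathcal F_{sm}(u^k)$ to both sides and using $\mathcal R=\mathcal R_{ns}+\mathcal F_{sm}$ from \eqref{17}, we obtain an explicit element
\[
r^k:=\mathcal M(w^k-u^k)+\bigl(\mathcal F_{sm}(u^k)-\mathcal F_{sm}(w^k)\bigr)\in\mathcal R(u^k).
\]
This is exactly the perturbed degenerate proximal point structure of Proposition~\ref{prop:DPPA-Equiv}, now read at the intermediate iterate. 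The triangle inequality then splits $\|r^k\|$ into two terms, which we bound separately.

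For the first term, spectral calculus gives $\|\mathcal M d\|\le\|\mathcal M\|^{1/2}\|d\|_{\mathcal M}$, so
\[
\|\mathcal M(w^k-u^k)\|\le\|\mathcal M\|^{1/2}\|w^k-\mathbf T w^k\|_{\mathcal M}.
\]
For the smooth perturbation term, we reuse the argument behind \eqref{eq:F-control-by-M}. By the range condition \eqref{assume-range}, the difference $\Delta:=\mathcal F_{sm}(u^k)-\mathcal F_{sm}(w^k)$ lies in $\operatorname{Im}(\mathcal M)$. Hence the generalized Cauchy--Schwarz inequality $\langle\Delta,d\rangle\le\|\Delta\|_{\mathcal M^\dagger}\|d\|_{\mathcal M}$ applies. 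Combining it with the cocoercivity \eqref{assume-coco-Ieq} gives $\|\Delta\|_{\mathcal M^\dagger}\le2\beta\|u^k-w^k\|_{\mathcal M}$; the case $\Delta=0$ is trivial. For vectors in $\operatorname{Im}(\mathcal M)$ we also have $\|v\|\le\|\mathcal M\|^{1/2}\|v\|_{\mathcal M^\dagger}$, because on that subspace $\mathcal M^\dagger$ has eigenvalues at least $1/\|\mathcal M\|$. Together these give
\[
\|\Delta\|\le2\beta\|\mathcal M\|^{1/2}\|w^k-\mathbf T w^k\|_{\mathcal M}.
\]
Summing the two bounds yields \eqref{eq:kkt-residual-bound-general}.

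The rate \eqref{eq:kkt-rate-general} then follows immediately. Since $\operatorname{dist}(0,\mathcal R(u^k))\le\|r^k\|$, inserting \eqref{eq:halpern-rate-T} from Theorem~\ref{thm:halpern_rate} gives the claim.

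The only delicate step is controlling the ambient norm of the smooth-gradient difference when $\mathcal M$ is singular. This is exactly where the range-compatibility part of Assumption~\ref{assume-coco} is essential. Without it, $\|\cdot\|_{\mathcal M^\dagger}$ would not control the component of $\Delta$ in $\ker(\mathcal M)$, and the Cauchy--Schwarz step would fail. With it, $\Delta\in\operatorname{Im}(\mathcal M)$, and both the Cauchy--Schwarz inequality and the pseudoinverse norm comparison hold.
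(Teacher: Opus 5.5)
Your proposal is correct and follows the paper's proof essentially step by step. You use the same residual element $r^k=\mathcal M(w^k-u^k)+\mathcal F_{sm}(u^k)-\mathcal F_{sm}(w^k)\in\mathcal R(u^k)$ and bound its two terms the same way. The smooth-gradient term is controlled through the range condition, the generalized Cauchy--Schwarz inequality, cocoercivity (giving the factor $2\beta$) and the spectral comparison $\|v\|\le\|\mathcal M\|^{1/2}\|v\|_{\mathcal M^\dagger}$ on $\operatorname{Im}(\mathcal M)$, and the rate then follows from \eqref{eq:halpern-rate-T} in Theorem~\ref{thm:halpern_rate}.
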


\begin{proof}
Set $d^k:=w^k-u^k. $
By the definition of $\mathbf T$, we have
$$%
0\in \mathcal R_{ns}(u^k) + \mathcal F_{sm}(w^k) + \mathcal M(u^k-w^k).
$$%
Equivalently,
\begin{equation}\label{eq:kkt-rate-proof-rns}
\mathcal M d^k-\mathcal F_{sm}(w^k) \in \mathcal R_{ns}(u^k).
\end{equation}
Adding $\mathcal F_{sm}(u^k)$ to both sides of \eqref{eq:kkt-rate-proof-rns}, we obtain
\begin{equation}\label{eq:kkt-rate-proof-rk}
r^k:=\mathcal M d^k+\mathcal F_{sm}(u^k)-\mathcal F_{sm}(w^k)\in\mathcal R(u^k).
\end{equation}
Hence,
\begin{equation}\label{eq:kkt-rate-proof-dist}
\operatorname{dist}(0,\mathcal R(u^k))\leq \|r^k\|.
\end{equation}

We next estimate the two terms in \eqref{eq:kkt-rate-proof-rk}. First, since $\mathcal M$ is self-adjoint and positive semidefinite,
\begin{equation}\label{eq:Md-bound}
\|\mathcal M d^k\| \leq \|\mathcal M\|^{1/2} \|d^k\|_{\mathcal M}.
\end{equation}
Indeed, this follows from
$
\|\mathcal M d^k\|^2=\langle \mathcal M d^k,\mathcal M d^k\rangle
\leq\|\mathcal M\|\langle d^k,\mathcal M d^k\rangle
=\|\mathcal M\|\|d^k\|_{\mathcal M}^2$.

Second, let $\Delta\mathcal F^k:=\mathcal F_{sm}(u^k)-\mathcal F_{sm}(w^k)$. By Assumption~\ref{assume-coco}, applied to $u^k$ and $w^k$, we have
\begin{equation}\label{eq:coco-uk-wk}
\frac{1}{2\beta}
\|\Delta\mathcal F^k\|_{\mathcal M^\dagger}^{2} \leq \langle \Delta\mathcal F^k,u^k-w^k\rangle .
\end{equation}
Since $\Delta\mathcal F^k\in\operatorname{Im}(\mathcal M)$, the generalized Cauchy--Schwarz inequality associated with the pair $(\mathcal M,\mathcal M^\dagger)$ gives
\begin{equation}\label{eq:generalized-cauchy}
\langle \Delta\mathcal F^k,u^k-w^k\rangle
\leq \|\Delta\mathcal F^k\|_{\mathcal M^\dagger} \|u^k-w^k\|_{\mathcal M}.
\end{equation}
Combining \eqref{eq:coco-uk-wk} and \eqref{eq:generalized-cauchy}, we obtain
\begin{equation}\label{eq:F-Mdagger-bound}
\|\Delta\mathcal F^k\|_{\mathcal M^\dagger} \leq 2\beta \|u^k-w^k\|_{\mathcal M}.
\end{equation}
Moreover, since $\Delta\mathcal F^k\in\operatorname{Im}(\mathcal M)$, the spectral characterization of the Moore--Penrose inverse implies
\begin{equation}\label{eq:ambient-from-Mdagger}
\|\Delta\mathcal F^k\| \leq
\|\mathcal M\|^{1/2} \|\Delta\mathcal F^k\|_{\mathcal M^\dagger}.
\end{equation}
Combining \eqref{eq:F-Mdagger-bound} and \eqref{eq:ambient-from-Mdagger} yields
\begin{equation}\label{eq:F-ambient-bound}
\|\Delta\mathcal F^k\|\leq 2\beta\|\mathcal M\|^{1/2}\|u^k-w^k\|_{\mathcal M}.
\end{equation}

Now, using \eqref{eq:kkt-rate-proof-rk}, \eqref{eq:Md-bound}, and \eqref{eq:F-ambient-bound}, we obtain
$$
\begin{aligned}
\|r^k\|&\leq
\|\mathcal M d^k\|+\|\Delta\mathcal F^k\|\leq(1+2\beta)\|\mathcal M\|^{1/2}\|u^k-w^k\|_{\mathcal M}\\
&=(1+2\beta)\|\mathcal M\|^{1/2}\|w^k-\mathbf T(w^k)\|_{\mathcal M}.
\end{aligned}
$$
This proves \eqref{eq:kkt-residual-bound-general}. Finally, \eqref{eq:kkt-rate-general} follows from \eqref{eq:kkt-rate-proof-dist}, \eqref{eq:kkt-residual-bound-general}, and Theorem~\ref{thm:halpern_rate}.
\end{proof}

\section{Numerical Experiments}\label{sec:numerical}

We first examine the predicted residual decay of Halpern--MajADMM.  We then
study the effects of indefinite proximal terms and majorization, before
comparing a practical implementation with related methods on public convex
quadratic programs.
The methods were implemented in MATLAB R2021b and run in double precision
on an Intel Xeon Gold 6326 CPU, using a single computational thread.
In each comparison, the residual is evaluated at the intermediate output of
the corresponding unrelaxed ADMM map.  The residual definitions and stopping
rules are given in the respective subsections.

\subsection{Validation of the Halpern rate mechanism}
\label{subsec:halpern-rate-mechanism}

We test the $\mathcal O(1/k)$ exact KKT residual rate predicted by
Theorem~\ref{thm:halpern_rate} and Corollary~\ref{cor:kkt-rate} on the
horizon-dependent hard family introduced by Chen et
al.~\cite{chen2026admmkkt}.  Here $K$ denotes the prescribed number of
iterations, while $k=0,\ldots,K-1$ is the iteration index of Section~4
for each fixed problem.  For each $K$, let
$y=(y_s,y_n)$ and $z=(z_s,z_n)$ with $y_s,z_s\in\mathbb R^2$ and
$y_n,z_n\in\mathbb R$, and consider
\begin{equation}\label{eq:hard-family-PK}
 (\mathcal P_K)\qquad
 \min_{y,z}\ 
 \delta_U(y_s)+\delta_{V_K}(z_s)+\phi_K(z_n)
 \quad\mathrm{s.t.}\quad y-z=0,
\end{equation}
where $e_1=(1,0)^\top\in\mathbb R^2$ is the first coordinate unit vector,
\[
 U=\operatorname{span}(e_1),\qquad
 V_K=\operatorname{span}\bigl((\cos\theta_K,\sin\theta_K)\bigr),
 \qquad \theta_K=\frac1{\sqrt K},
\]
and
\[
 \phi_K(t)=\mu_K\sqrt{t^2+\epsilon_K^2},\qquad
 \mu_K=\frac{A_0}{4\sqrt K},\qquad
 \epsilon_K=\frac{\mu_K}{10},\qquad A_0=\frac1{\sqrt2}.
\]
The nearly parallel subspaces create slow fixed-point dynamics, while the
smooth scalar term has gradient Lipschitz constant $10$ and makes the
majorization nontrivial.

We embed \eqref{eq:hard-family-PK} in \eqref{model} with $A=I$, $B=-I$,
$c=0$, and choose the parameters and proximal operators as
\begin{equation}\label{eq:hard-family-endpoint}
 \sigma=\beta=\rho=1,\qquad
 \widehat\Sigma_g=10e_ne_n^\top,\qquad
 \mathcal S=0,\qquad
 \mathcal T=-\tfrac12\widehat\Sigma_g.
\end{equation}
Here $e_n=(0,0,1)^\top\in\mathbb R^3$ is the coordinate unit vector
corresponding to the scalar component $z_n$.  These choices give
$\mathcal P_g=5e_ne_n^\top$ and $\|\mathcal M\|=5$, and satisfy the
well-posedness, range, and cocoercivity conditions of Sections~2--4.
The initial state
\[
 y^0=(0,0,A_0),\qquad z^0=0,\qquad x^0=(A_0,0,0)
\]
satisfies $\|w^0-w^*\|_{\mathcal M}=1$ for $w^*=0$.

Let $\mathbf T_K$ denote the specialization of the fixed-point mapping
$\mathbf T$ in \eqref{def-T} to problem $\mathcal P_K$ under the parameter
choice \eqref{eq:hard-family-endpoint}.  Since $\rho=1$, the relaxed mapping
in Algorithm~\ref{alg:halpern} coincides with $\mathbf T_K$.  Starting from
the same $w^0$, Halpern--MajADMM and its matched MajADMM control use,
respectively,
\begin{equation}\label{eq:hard-family-matched-iterations}
 u^k=\mathbf T_K(w^k),\qquad
 w^{k+1}=\frac1{k+2}w^0+\frac{k+1}{k+2}u^k,\qquad
 w_{\rm c}^{k+1}=\mathbf T_K(w_{\rm c}^k).
\end{equation}
Thus MajADMM differs only by omitting the Halpern anchoring.

\begin{figure}[!t]
\centering
\includegraphics[width=0.70\textwidth]{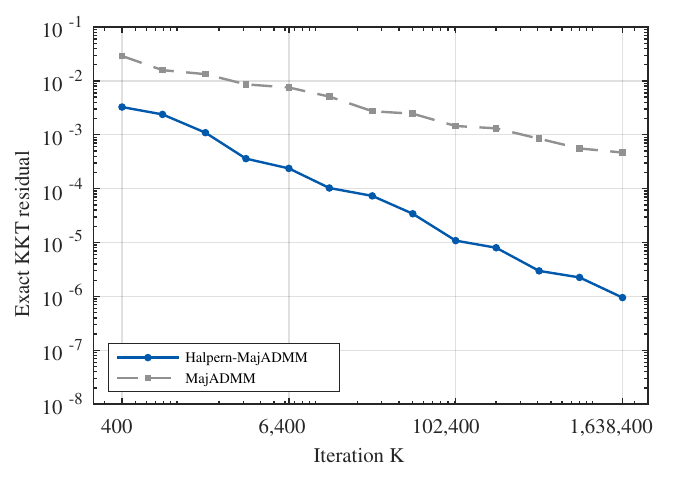}
\caption{Exact KKT residuals on the horizon-dependent family $\mathcal P_K$.}
\label{fig:hard-family-halpern-rate}
\end{figure}

After $K$ iterations, the terminal outputs are $u^{K-1}$ and
$w_{\rm c}^{K}$, at which we evaluate
$R_{\rm KKT}(v)=\operatorname{dist}(0,\mathcal R(v))$.
The bounds in Section~4 at $k=K-1$ become
\begin{equation}\label{eq:hard-family-explicit-bounds}
 \|w^{K-1}-\mathbf T_K(w^{K-1})\|_{\mathcal M}\leq\frac2K,
 \qquad
 \operatorname{dist}\bigl(0,\mathcal R(u^{K-1})\bigr)
 \leq\frac{6\sqrt5}{K}.
\end{equation}

We prespecified the 13 horizons
$K\in\{20^2,\allowbreak 28^2,\allowbreak 40^2,\allowbreak 56^2,\allowbreak
80^2,\allowbreak 112^2,\allowbreak 160^2,\allowbreak 224^2,\allowbreak
320^2,\allowbreak 448^2,\allowbreak 640^2,\allowbreak 896^2,\allowbreak
1280^2\}$.  Each problem $\mathcal P_K$ was run for exactly $K$ iterations.
Each marker in Figure~\ref{fig:hard-family-halpern-rate} therefore represents
a different problem, evaluated at its prescribed horizon.
The fitted tail slope is the least-squares slope of
$\log R_{\rm KKT}$ against $\log K$ over the seven largest horizons; a slope
$s$ corresponds to the power law $R_{\rm KKT}\approx C K^s$.
The anchor remains $w^0$ throughout; no restart or adaptive penalty is used.

Figure~\ref{fig:hard-family-halpern-rate} shows smaller exact KKT residuals
for Halpern--MajADMM at every tested horizon.  The fitted slopes are
$-1.020$ for Halpern--MajADMM and $-0.454$ for MajADMM.  The Halpern iterates
also satisfy both bounds in \eqref{eq:hard-family-explicit-bounds}; their
residual decay is consistent with the predicted $\mathcal O(1/K)$ rate.

\FloatBarrier

\subsection{Effects of indefinite proximal terms and majorization}
\label{subsec:indefinite-majorization}

We first compare the admissible indefinite proximal term with its
semiproximal counterpart, changing only $\mathcal S$ in a fixed-anchor
Halpern--MajADMM iteration.  We then compare majorized and exact nonlinear
subproblem solves under the same pADMM iteration without Halpern anchoring.

For the first comparison, let $m=64$, $y=(y_1,\ldots,y_m)$, and
$z=(z_1,\ldots,z_m)$.  For $j=1,\ldots,m$, set
$U_j=\operatorname{span}(e_1)$ and
$V_j=\operatorname{span}((\cos\theta_j,\sin\theta_j))$, and let $a_j^*$ be a
unit vector.  Set $a^*=(a_1^*,\ldots,a_m^*)$ and define
\[
 \mathcal U_j=a_j^*+U_j,\qquad
 \mathcal V_j=a_j^*+V_j,
 \qquad
 \mathcal U=\prod_{j=1}^m\mathcal U_j,
 \quad
 \mathcal V=\prod_{j=1}^m\mathcal V_j.
\]
With $\Phi(s)=\frac12\sum_{\ell=1}^{2}
\left(s_\ell^2-\log(1+s_\ell^2)\right)$, we solve
\begin{equation}\label{eq:indefinite-test-family}
\begin{aligned}
\min_{y,z}\quad
&\sum_{j=1}^{m}\mu_j\Phi(y_j-a_j^*)
+\delta_{\mathcal U}(y)+\delta_{\mathcal V}(z)\\
\mathrm{s.t.}\quad &y-z=0.
\end{aligned}
\end{equation}
The angles $\theta_j$ are drawn uniformly from
$[0.5^\circ,2.5^\circ]$, and the polar angles of $a_j^*$ are drawn uniformly
from $[0,360^\circ]$.  One quarter of the blocks use $\mu_j=0.05$, and the
remaining blocks use $\mu_j=0.8$.  The solution is $y^*=z^*=a^*$ with zero
multiplier, and $\widehat\Sigma_{f,j}=(9/8)\mu_j I_2$ is a global quadratic
majorant matrix.

For a mapped point $(y,z,x)$, where $x=(x_1,\ldots,x_m)$ is the multiplier,
we evaluate
\begin{equation}\label{eq:indefinite-test-residual}
 R_{\rm KKT}=\left[\frac1m\sum_{j=1}^m\left(
 \begin{aligned}
 &\operatorname{dist}^2(y_j,\mathcal U_j)
 +\operatorname{dist}^2(z_j,\mathcal V_j)+\|y_j-z_j\|^2\\[-1mm]
 &+\|\Pi_{U_j}(\mu_j\nabla\Phi(y_j-a_j^*)+x_j)\|^2
 +\|\Pi_{V_j}x_j\|^2
 \end{aligned}\right)\right]^{1/2}.
\end{equation}

Here $\Pi_C$ denotes the Euclidean projection onto a closed convex set $C$.
In particular, $\Pi_{U_j}$ and $\Pi_{V_j}$ project onto the linear direction
spaces $U_j$ and $V_j$, while the distance terms use the affine sets
$\mathcal U_j$ and $\mathcal V_j$.

We set $\beta=\rho=\sigma=1$, $\mathcal T=0$, and use the fixed-anchor
Halpern iteration in Algorithm~\ref{alg:halpern}.  The common
anchor is the state obtained after 675 MajADMM map evaluations with
$\mathcal S=-\widehat\Sigma_f/2$, starting from the zero state.
The warm-start length was selected on 16
development instances; all settings were then fixed before testing 24
disjoint held-out instances.  The total budget of 3000 iterations includes
the 675 warm-start iterations.  Starting from the common anchor, the paired
runs differ only in
\begin{equation}\label{eq:indefinite-paired-settings}
 \mathcal S=-\tfrac12\widehat\Sigma_f
 \quad\text{versus}\quad
 \mathcal S=0.
\end{equation}
These are the admissible indefinite endpoint and its
semiproximal control.

For the second comparison, let
$\mathcal Y=a+\operatorname{range}(P)$ and
$\mathcal Z=a+\operatorname{range}(Q)$, and consider
the affine-consensus problem
\begin{equation}\label{eq:majorization-cost-family}
\begin{aligned}
\min_{y,z}\quad
&F(y)+G(z)+\delta_{\mathcal Y}(y)+\delta_{\mathcal Z}(z)\\
\mathrm{s.t.}\quad &y-z=0,
\end{aligned}
\end{equation}
where $C,D\in\mathbb R^{r\times n}$ and
\[
 F(y)=\sum_{\ell=1}^{r}\log\cosh((C(y-a))_\ell),\qquad
 G(z)=\sum_{\ell=1}^{r}\log\cosh((D(z-a))_\ell).
\]
The matrices $P,Q\in\mathbb R^{n\times d}$ have orthonormal columns, and all
$d$ principal angles between $\operatorname{range}(P)$ and
$\operatorname{range}(Q)$ equal $\theta$.  The 24 held-out instances use
$n\in\{64,128\}$, $d=n/2$, $\theta\in\{1^\circ,10^\circ,45^\circ\}$,
$r/n\in\{0.25,0.75\}$, and positive-spectrum condition numbers
$\kappa\in\{1,100\}$ for $C^*C$ and $D^*D$.

We compare MajADMM with Exact pADMM, which uses the same update order but
solves the original nonlinear subproblems without majorization.  Since the
second derivative of $\log\cosh$ is bounded by one, $C^*C$ and $D^*D$ are
global majorant matrices, as in the majorization framework
of~\cite{li2016majorized}.  MajADMM uses prefactored linear solves, whereas
Exact pADMM solves the original nonlinear subproblems by safeguarded Newton
iterations to relative first-order residual $10^{-10}$.  Both methods use the
unanchored iteration with $\rho=1$, $\sigma=0.1$, and
$\mathcal S=\mathcal T=0$.  Both start from
$y^0=\Pi_{\mathcal Y}(0)$, $z^0=\Pi_{\mathcal Z}(0)$, and $x^0=0$,
and use the same update order, mapping budget, and KKT evaluator.

At a mapped point $(y,z,x)$, define
\begin{equation}\label{eq:majorization-test-residual}
\begin{aligned}
 \eta_y&=\frac{\|y-\Pi_{\mathcal Y}(y-\nabla F(y)-x)\|}
 {1+\|y\|+\|\nabla F(y)\|+\|x\|},\\
 \eta_z&=\frac{\|z-\Pi_{\mathcal Z}(z-\nabla G(z)+x)\|}
 {1+\|z\|+\|\nabla G(z)\|+\|x\|},\\
 \eta_p &=\frac{\|y-z\|}{1+\|y\|+\|z\|},
 \qquad R_{\rm KKT}=\max\{\eta_y,\eta_z,\eta_p\}.
\end{aligned}
\end{equation}
For each method--instance pair, we report the median elapsed time of three
repetitions.  To compare mean times, we average these medians over the
instances and take the ratio between methods.  Penalized average runtime
with factor two (PAR-2)
assigns twice the setup-inclusive elapsed time after the full 1000-map-call
budget when $R_{\rm KKT}>10^{-5}$ at termination.

Figure~\ref{fig:indefinite-majorization-pairs} presents paired results on
24 held-out instances per problem family.  The left figure compares the
KKT residuals for \eqref{eq:indefinite-test-family} after the total budget
of 3000 iterations: the indefinite choice is on the vertical axis and the
$\mathcal S=0$ control on the horizontal axis.  The right figure compares
the setup-inclusive PAR-2 times for \eqref{eq:majorization-cost-family}
at the KKT target $10^{-5}$, with MajADMM on the vertical axis and
Exact pADMM on the horizontal axis.  In the right figure, circles and squares
denote $n=64$ and $n=128$, respectively.  In both figures, each marker represents one instance,
and points below the diagonal favor the method on the vertical axis.

\begin{figure}[!tbp]
\centering
\includegraphics[width=0.98\textwidth]{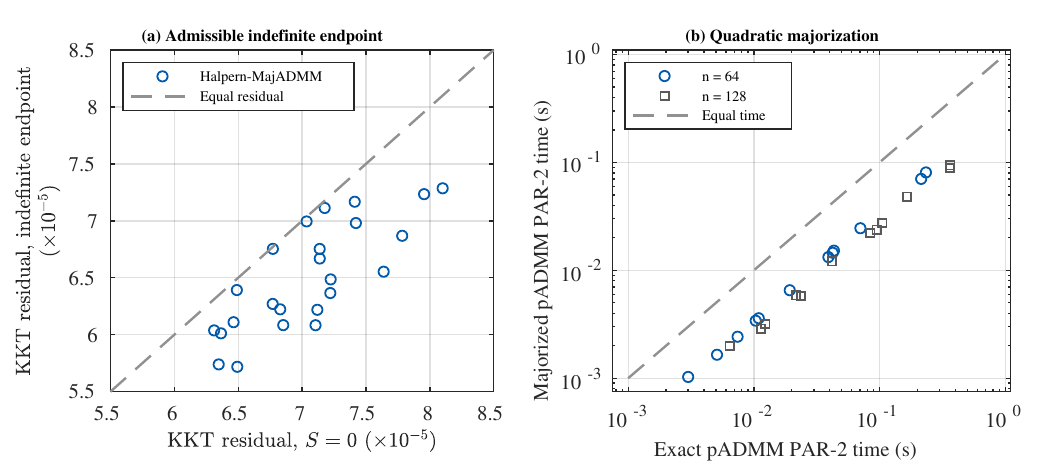}
\caption{Effects of indefinite proximal terms and majorization.}
\label{fig:indefinite-majorization-pairs}
\end{figure}

The indefinite proximal term gives a smaller KKT residual on every instance
in the first comparison.  To summarize residuals across the 24 instances,
we use the root-mean-square (RMS) value
$(24^{-1}\sum_{i=1}^{24}R_i^2)^{1/2}$, where $R_i$ is the residual on
instance $i$.  The RMS residual of the indefinite choice is $0.9229$ times
that of the $\mathcal S=0$ control.

In the second comparison, the two methods have nearly equal RMS residuals
at the three iteration budgets, but MajADMM requires less solver
time per map.  Its RMS residual divided by that of Exact pADMM is $1.0000$
to four decimal places after each of 100, 300, and 1000 iterations.
The mean solver time per map and mean PAR-2 time for MajADMM are
$0.2678$ and $0.2869$ times the corresponding values for Exact pADMM.
MajADMM has a lower PAR-2 time on all 24 instances.  A paired
bootstrap with 10,000 resamples gives a one-sided 95\% upper confidence bound
of $0.3121$ for the PAR-2 ratio.

\FloatBarrier

\subsection{Performance on quadratic programs}
\label{subsec:qp-performance}

We consider convex quadratic programs
\begin{equation}\label{eq:public-qp}
 \min_x\ \frac12\langle x,Qx\rangle+\langle c,x\rangle
 \quad\mathrm{s.t.}\quad Ax=b,\qquad \ell\leq x\leq u,
\end{equation}
where $Q\succeq0$.  The instances are from the public
Maros--M\'{e}sz\'{a}ros collection~\cite{maros1999repository} and are
converted to \eqref{eq:public-qp} by the standard QPPAL preprocessing
of~\cite{liang2022qppal}.  We report QSCTAP2 and QSCTAP3, whose resulting
dimensions $(m,n)$ are $(1090,2500)$ and $(1480,3340)$, respectively.

All methods are evaluated using the same normalized KKT residual for
\eqref{eq:public-qp}.  At $(x,\nu,\mu)$, it is defined by
\begingroup
\medmuskip=2mu
\begin{equation}\label{eq:qp-common-kkt}
 R_{\rm KKT}=\max\biggl\{
 \frac{\|Qx+c+A^*\nu+\mu\|}
 {1+\|c\|+\|Qx\|+\|A^*\nu\|+\|\mu\|},
 \frac{\|Ax-b\|}{1+\|b\|},
 \frac{\|x-\Pi_{[\ell,u]}(x+\mu)\|}
 {1+\|x\|+\|\mu\|}\biggr\},
\end{equation}
\endgroup
where $\nu$ and $\mu$ are the equality and bound multipliers.
The residual is evaluated at the intermediate output of the unrelaxed
ADMM map, before relaxation, Halpern or accelerated averaging, and state
acceptance.  For the majorized splitting, the primal variable in
\eqref{eq:qp-common-kkt} is its mapped variable $y$; for the other
methods, it is their mapped primal variable $x$.
Runs stop when $R_{\rm KKT}\leq10^{-8}$ or after 600 seconds, with no
iteration limit.  Figure~\ref{fig:public-qp-performance} shows the running
minima $\min_{1\leq k\leq K}R_{\rm KKT}(k)$ on QSCTAP2 (left figure)
and QSCTAP3 (right figure), using residuals recorded at every iteration
through $K=3500$.  The horizontal dashed line marks the common tolerance
$10^{-8}$.

\begin{figure}[!tbp]
\centering
\includegraphics[width=0.98\textwidth]{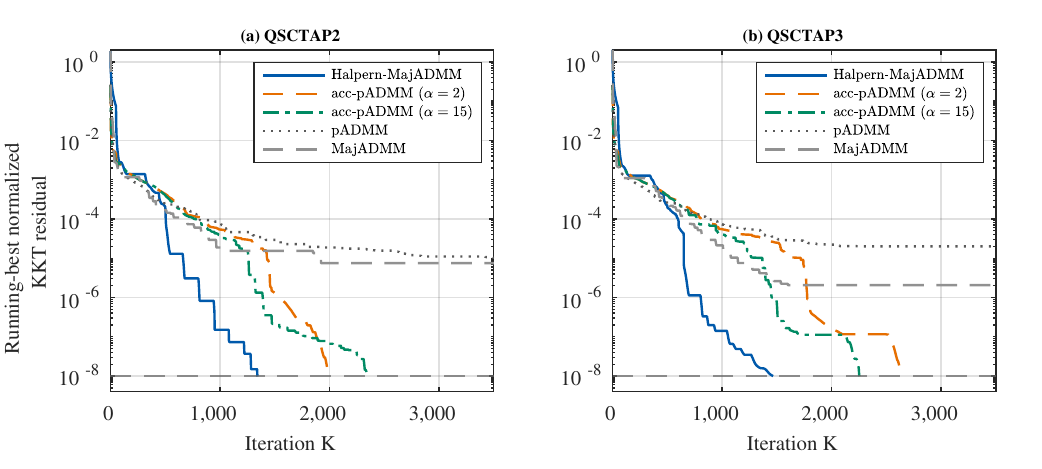}
\caption{KKT residuals on QSCTAP2 and QSCTAP3.}
\label{fig:public-qp-performance}
\end{figure}

Halpern--MajADMM denotes the practical variant with $\beta=\rho=1$ and
$\mathcal S=-Q/2$.  It uses the Halpern update in
Algorithm~\ref{alg:halpern} within each restart cycle.  All variables start
at zero, and $\sigma_0=1$.  Every 50 iterations, the penalty candidate is
$\sigma\sqrt{r}$, where $r$ is $\eta_{\rm p}/\max\{\eta_{\rm d},10^{-12}\}$
clipped to $[10^{-12},10^{12}]$.  Here $\eta_{\rm p}$ is the maximum of the
equality term at $y$ and $\|y-z\|/(1+\|y\|+\|z\|)$, while
$\eta_{\rm d}$ is the maximum of the stationarity term at $y$ and the
box term at $z$, with the normalizations in \eqref{eq:qp-common-kkt}.
The candidate is clipped to $[10^{-6},10^6]$ and accepted only if it differs
from the current penalty by a factor of at least $1.2$.  The multipliers
are unchanged when the penalty changes.

A penalty change restarts Halpern--MajADMM from the accepted iterate.
Otherwise, an HPR-QP-style safeguard~\cite{chen2025hprqp} restarts from the
unrelaxed map output.\footnote{At
the 50-iteration checkpoints, let $d$ be the current fixed-point residual
in the current preconditioner seminorm, and let $d_{\rm ref}$ and
$d_{\rm prev}$ be its first and preceding checked values in the cycle.
The safeguard triggers if $d\leq0.2d_{\rm ref}$, or if
$d\leq0.8d_{\rm ref}$ and $d>d_{\rm prev}$, or if the cycle length reaches
$\max\{100,\lceil\gamma K\rceil\}$.  Here $K$ is the total iteration count;
$\gamma=0.5$, reduced to $0.2$ whenever $d$ is at most $0.1$ times its
initial-run value.  The initial cycle uses the first map residual as its
reference; later cycles initialize the reference at their first checkpoint.}
Each restart updates the anchor and resets the local Halpern index.
The matched MajADMM control uses the same endpoint, map, and penalty rule,
but omits Halpern anchoring and restart.  These practical updates allow the
map to change, unlike the fixed-map experiment in
Section~\ref{subsec:halpern-rate-mechanism}.

We compare with pADMM and accelerated pADMM (acc-pADMM) using the parameter
settings reported by Sun et al.~\cite{sun2025accelerating}.  The pADMM comparison uses
$\rho=1.9$; the two acc-pADMM comparisons use $\rho=2$ and
$\alpha\in\{2,15\}$.  Here $\alpha\geq2$ controls the acceleration weights
in Algorithm~3.1 of~\cite{sun2025accelerating}: $\alpha=2$ gives Halpern
anchoring, whereas $\alpha>2$ gives the fast Krasnosel'ski\u{\i}--Mann
iteration.  This update is applied within each restart cycle.
Penalty updates are checked every 50 iterations using residual balancing
with threshold $0.75$ and multiplicative factors $1.25$ and $0.8$.
The primal measure is the maximum of the equality and box terms in
\eqref{eq:qp-common-kkt} and the quadratic-consistency term
$\|Qx-q\|/(1+\|Qx\|+\|q\|)$, where $q$ is the auxiliary variable
representing $Qx$.  The dual measure is
$\|q+c+A^*\nu+\mu\|/(1+\|c\|)$, as in
\cite{sun2025accelerating}.  The penalty is multiplied by $1.25$ when the
primal measure is below $0.75$ times the dual measure, and by $0.8$ when
the converse holds.  The acc-pADMM variants restart every 200 iterations
and whenever the penalty changes.  All parameter-update and restart
settings were fixed before these tests.
The three baselines are our independent implementations, not reproductions
of the authors' source code.

The penalty and restart rules retain their formulation-specific control
measures; reporting and termination use only the common residual
\eqref{eq:qp-common-kkt}.

Halpern--MajADMM attains $R_{\rm KKT}\leq10^{-8}$ after 1342 iterations on
QSCTAP2 and 1467 iterations on QSCTAP3.  Among the displayed external
baselines, acc-pADMM with $\alpha=2$ reaches the same tolerance in the fewest iterations
on QSCTAP2, after 1997 iterations; on QSCTAP3, acc-pADMM with
$\alpha=15$ does so after 2254 iterations.  On these two instances,
Halpern--MajADMM therefore requires $32.8\%$ and $34.9\%$ fewer iterations,
respectively, than the best-performing external baseline.

\FloatBarrier

\section{Conclusion}
\label{sec:conclusion}

We have developed a Halpern-accelerated majorized ADMM for linearly
constrained convex composite optimization with possibly indefinite proximal
terms.  The perturbed degenerate proximal point representation establishes
nonexpansiveness of the relaxed mapping in the preconditioner seminorm
under the stated assumptions.  Halpern anchoring then gives an
$\mathcal O(1/k)$ fixed-point residual bound and a nonergodic
$\mathcal O(1/k)$ KKT residual bound at the intermediate iterates.
The numerical experiments illustrate the predicted residual decay and the
computational benefits of indefinite proximal terms and quadratic
majorization.  On the two reported public quadratic programs, the practical
variant with adaptive penalties and restarts reaches the common KKT
tolerance in fewer iterations than the compared pADMM and acc-pADMM methods.

\Needspace{6\baselineskip}

\end{document}